\documentclass[reqno]{amsart}

\usepackage{amsmath,amssymb,amscd, accents} \usepackage{graphicx}
\DeclareGraphicsExtensions{.eps} \usepackage{mathrsfs}
\usepackage[mathcal]{eucal}

\newtheorem{Thm}{Theorem}{\bfseries}{\itshape}
\newtheorem*{Thm*}{Theorem}{\bfseries}{\itshape}
\newtheorem{Cor}{Corollary}{\bfseries}{\itshape}
\newtheorem{Prop}[Cor]{Proposition}{\bfseries}{\itshape}
\newtheorem{Lem}[Cor]{Lemma}{\bfseries}{\itshape}
\newtheorem*{Lem*}{Lemma}{\bfseries}{\itshape}
{\bfseries}{\itshape}
\newtheorem{Conj}[Cor]{Conjecture}{\bfseries}{\itshape}
\newtheorem{Def}[Cor]{Definition}{\bfseries}{\rmfamily}
{\scshape}{\rmfamily}
\newtheorem{Rem}[Cor]{Remark}{\scshape}{\rmfamily}

\renewcommand\ge{\geqslant} \renewcommand\le{\leqslant}
\let\tildeaccent=\~ \let\hataccent=\^
\renewcommand\~[1]{\widetilde{#1}}
\def\o{\accentset{\circ}}

\def\<{\left<} \def\>{\right>} \def\({\left(} \def\){\right)}

\def\abs#1{\left\vert #1 \right\vert}  

\let\parasymbol=\S \def\secref#1{\parasymbol\ref{#1}}
 \def\pd#1#2{\tfrac{\partial#1}{\partial#2}}

\let\polishL=l \def\Zoladek.{\.Zol\c adek}

\def\codim{\operatorname{codim}}

 \def\ord{\operatorname{ord}}
 \def\etc.{\emph{etc}.}

\def\:{\colon} \def\R{{\mathbb R}} \def\C{{\mathbb C}} \def\Z{{\mathbb
    Z}} \def\N{{\mathbb N}}  
 \def\e{\varepsilon} \def\S{\varSigma}

 \def\d{{\mathrm d}}

 \let\PolishL=\L \def\Lojas.{\PolishL ojasiewicz}

\def\cF{{\mathcal F}}  
  
\def\cC{{\mathcal C}} \def\cM{{\mathcal M}}

 \def\mult{\operatorname{mult}}

\def\rest#1{{\vert_{#1}}}

\def\vell{{\boldsymbol\ell}}

\def\vH{{\mathbf H}}
\def\vL{{\mathbf L}}
\def\vc{{\mathbf c}}
\def\w{\omega}

\def\degf{\mathfrak{D}}
\def\fg{\mathfrak{g}}

\def\wt{\operatorname{wt}}

\def\_#1{_{1\ldots #1}}

\begin{document}

\title{Pfaffian Intersections and Multiplicity Cycles}

\author{Gal Binyamini}
\address{University of Toronto}
\email{galbin@gmail.com}
\thanks{The author was supported by the Banting
Postdoctoral Fellowship and the Rothschild Fellowship}

\begin{abstract}
  We consider the problem of estimating the intersection multiplicity
  between an algebraic variety and a Pfaffian foliation, at every
  point of the variety. We show that this multiplicity can be
  majorized at every point $p$ by the local algebraic multiplicity at
  $p$ of a suitably constructed algebraic cycle. The construction is
  based on Gabrielov's complex analog of the Rolle-Khovanskii lemma.

  We illustrate the main result by deriving similar uniform estimates
  for the complexity of the Milnor fiber of a deformation (under a
  smoothness assumption) and for the order of contact between an
  algebraic hypersurface and an arbitrary non-singular one-dimensional
  foliation. We also use the main result to give an alternative
  geometric proof for a classical multiplicity estimate in the context
  of commutative group varieties.
\end{abstract}

\subjclass[2010]{34C08 (primary) and 14C17 (secondary)}
\keywords{Pfaffian systems, Fewnomials, Local multiplicities}
\date{\today}

\maketitle

\section{Introduction}
\label{sec:intro}

\subsection{Motivation}

Let $M$ be a complex variety of dimension $n$. For simplicity of the
presentation we assume that $M=\C^n$. We consider problems of the
following type:
\begin{enumerate}
\item[I.] Given a polynomial vector field $\xi$ on $M$ and an algebraic
  hypersurface $V\subset M$, estimate for each $p\in V$ the order of
  contact between $V$ and the trajectory of $\xi$ through $p$.
\item[II.] More generally, given a foliation $\cF$ on $M$ of codimension $k$
  and an algebraic variety $V\subset M$ of dimension $k$, estimate
  for each $p\in V$ the intersection multiplicity between $V$ and the
  leaf of $\cF$ through $p$.
\item[III.] Given a flat algebraic family $V\subset M\times\C$, estimate for
  each point $p\in M$ the topological complexity (for instance, sum of
  Betti numbers) of the Milnor fiber of $V$ at $(p,0)$.
\end{enumerate}
These questions all share a similar nature. Namely, there is some map
$\mu:M\to\Z_{\ge0}$ defined in algebraic terms, which one would like
to estimate from above. Moreover, each of the questions involves
algebraic degrees (e.g. the degrees of the polynomials defining
$\xi,\cF,V$) and one often hopes to obtain an estimate in terms of the
degrees of this data. We assume for simplicity that all degrees
are bounded by a number $d$.

Various estimates for each of these problems have been studied (see
the appropriate subsection of~\secref{sec:applications} for
references). The most common form for an estimate of this type is
\begin{equation} \label{eq:estimate-no-p}
  \mu(p) \le \phi(d)
\end{equation}
where $\phi(d)$ is some function depending on $d$ and the discrete
parameters of the problem (such as the dimensions $n$ and $k$ in
problems I--III above). Motivated by the Bezout theorem, it is
reasonable to hope that $\phi$ can be taken to be a polynomial of
degree $n$ in $d$, and indeed this is often the case. Moreover, this
can easily be shown to be optimal (up to a multiplicative constant)
for each of the problems above if one considers only bounds of the
form~\eqref{eq:estimate-no-p}.

It is sometimes desirable to have a more detailed understanding of
the behavior of $\mu(p)$ as $p$ varies over a set of points. One may
hope that in this context more refined estimates can be given. For
instance, it is often reasonable to expect that outside a set of
codimension $k$ an estimate of order $d^{k-1}$ holds. A result of this
nature was proved for problem I in \cite{nesterenko:mult-nonlinear},
motivated by problems in transcendental number theory (following
similar but less refined estimates in \cite{bm:mult-I,bm:mult-II}).

We propose an algebraic mechanism which is useful in the description
of such phenomena. Recall that an algebraic cycle $\Gamma$ is a linear
combination of algebraic varieties with integer (for us also positive)
coefficients. The multiplicity $\mult_p V$ for an irreducible variety
$V$ is defined to be the multiplicity of the intersection at $p$
between $V$ and a generic linear space of complementary dimension.
This is extended to the multiplicity $\mult_p\Gamma$ for any cycle. An
estimate in terms of a \emph{multiplicity cycle} is an estimate of the
form
\begin{equation} \label{eq:estimate-gamma}
  \mu(p) \le \mult_p\Gamma
\end{equation}
for some cycle $\Gamma$. Again motivated by the Bezout theorem, one
may hope that the $k$-codimensional part of $\Gamma$, denoted
$\Gamma^k$, satisfies $\deg\Gamma^k=O(d^k)$. We prove the existence of
an estimate of this form for problem II above, under the assumption
that the foliation is Pfaffian (see~\secref{sec:mult-foliations} for
discussion and a conjecture regarding the general case). We then use
this result to derive similar estimates for problems I in full
generality and III under a smoothness assumption.

An estimate of the form~\eqref{eq:estimate-gamma} has convenient
algebraic properties which enable a broader range of applications than
the uniform estimate~\eqref{eq:estimate-no-p}. We illustrate this by
giving an alternative proof for a multiplicity estimate on group
varieties due to Masser and W\"ustholz
in~\secref{sec:group-estimates}.

Finally, we make a general remark on the form of the
estimate~\eqref{eq:estimate-gamma}. We were first able to obtain an
estimate of this form using the topological methods described in this
paper. Later, using this expression as a part of an inductive
hypothesis, we were also able to obtain a similar result for problem I
using an entirely different algebraic approach (see
Remark~\ref{rem:mult-sing} for a brief discussion on the relative
advantages of each approach). We view this as an indication that
estimates in terms of multiplicities of cycles are fundamentally
suitable for the treatment of problems of this type.

\subsection{Statement of the main result}

Let $M$ denote a complex variety of dimension $n$. Let
$\w_1,\ldots,\w_k\in\Lambda^1(M)$ be an ordered sequence of
holomorphic one-forms. We will say that $\w_1,\ldots,\w_k$ defines an
\emph{integrable Pfaffian system} at a point $p\in M$ if
$\w_1\wedge\cdots\wedge\w_k$ does not vanish at $p$ and there exists a
(necessarily unique) chain of germs of manifolds
$S^k_p\subset\cdots\subset S^1_p\subset M$ containing $p$, where each
inclusion is of codimension $1$ and such that $\w_i\rest{S^i}\equiv0$
for $i=1,\ldots,k$.

Let $\C_e$ denote the germ of $\C$ at the origin, and $e$ the
coordinate function on $\C_e$. A function $f:M\times \C_e\to\C$ will
be called a \emph{family of analytic functions} on $M$ depending on
the parameter $e$. We adopt the notation $f(x,e)\equiv f^e(x)$. Recall
that a variety $V\subset M\times\C_e$ is said to be \emph{flat} if
each of its components project dominantly on $\C_e$. We denote by
$\cF(V)$ the flat family obtained from $V$ by removing any component
violating this condition. We denote by $V^\e:=V\cap\{e=\e\}$ the
$\e$-fiber of $V$.

To simplify the notation we denote $\w\_k:=\w_1,\ldots,\w_k$ and
similarly for a tuple of families of analytic functions $f\_m$. For
brevity, we also denote $\bigwedge\w\_k:=\w_1\wedge\cdots\wedge\w_k$.

\begin{Def}
  For a family $V\subset M\times\C_e$ and $p\in M$, we define the
  \emph{deformation multiplicity} of $V$ at $p$, denoted
  $\mult_p^e(V)$, to be the number of isolated points in
  $V^\e,\e\neq0$ converging to $p$ as $\e\to0$.

  Let $\w\_k$ be a Pfaffian system integrable at $p\in M$ and $f\_m$
  be families of meromorphic functions defined near $p$, with
  $p+m=\dim V-1$. We define the deformation multiplicity
  \emph{relative to $f\_m$ and $\w\_k$} to be
  \begin{equation}
    \mult_p^e(f\_m;\w\_k;V) := \mult_p^e(\{f\_m=0\}\cap S^k_p\cap V).
  \end{equation}
  In this case we count each isolated point of intersection (for fixed
  $\e$) with its associated multiplicity in the intersection of the
  cycles $[S^k_p]$ and $[f\_m=0]$ on $V^\e$ (which may be negative if
  $f\_m$ have poles).

  We omit $e$ when it is clear from the context, $V$ if it is
  $M\times\C_e$ and $\w\_k$,$f\_m$ if they are empty sequences.
\end{Def}

Recall that a (mixed) algebraic cycle in $M$ is a linear combination
with integer coefficients of subvarieties of $M$. We will only
consider cycles with non-negative coefficients. For a mixed cycle
$\Gamma$, we denote by $\Gamma^j$ the $j$-codimensional part of
$\Gamma$. The (algebraic) multiplicity of a cycle $\Gamma$ of pure
dimension $j$ at a point $p$, denoted $\mult_p\Gamma$, is defined to
be the multiplicity of the intersection between $\Gamma$ and a generic
linear subspace of codimension $j$ passing through $p$. This is
extended to arbitrary mixed cycles by linearity.

Our goal is to majorize the multiplicity $\mult_p(f\_{n-k};\w\_k)$ as
a function of $p$ in terms of algebraic multiplicities. For simplicity
of the presentation we assume that the ambient space is given by
$M=\C^n$ and $f\_{n-k},\w\_k$ are polynomial (although the
construction could clearly be carried out for more general ambient
spaces). Our main result is as follows.

\begin{Thm}\label{thm:mult-cycle}
  There exist a mixed algebraic cycle $\Gamma:=\Gamma(f\_{n-k};\w\_k)$
  of top dimension $k$ in $M$ such that for any point $p$ where
  $\w\_k$ is integrable,
  \begin{equation}
    \mult_p(f\_{n-k};\w\_k) \le \mult_p\Gamma(f\_{n-k};\w\_k).
  \end{equation}
  Moreover, $\Gamma$ is obtained algebraically from $f\_{n-k};\w\_k$
  as described in~Definition~\ref{def:mult-cycle}. We refer to
  $\Gamma$ as the \emph{multiplicity cycle} associated to
  $f\_{n-k};\w\_k$.
\end{Thm}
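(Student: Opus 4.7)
The plan is to argue by induction on $k$, the number of Pfaffian forms. In the base case $k=0$ there is no foliation and $\mult_p(f\_n)$ is the classical intersection multiplicity of the $n$ hypersurfaces $\{f_i=0\}$ at $p$, which is majorised by the $p$-multiplicity of the intersection-theoretic product cycle $[f_1=0]\cdots[f_n=0]$; this gives the base-case cycle $\Gamma$.

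For the inductive step I would ``peel off'' the outermost form $\w_k$ using Gabrielov's complex Rolle--Khovanskii lemma. Near $p$ the germ $S^k_p$ is a codimension-one submanifold of $S^{k-1}_p$ on which $\w_k$ vanishes; the isolated points of $V^\e\cap\{f\_{n-k}=0\}\cap S^k_p$ converging to $p$ can be read as isolated zeros of $f_{n-k}$ along the analytic curve
\[
  C^\e := V^\e\cap S^{k-1}_p\cap\{f_1=\cdots=f_{n-k-1}=0\}.
\]
The lemma then bounds the number of such zeros by the number of nearby zeros on $C^\e$ of the Pfaffian criticality condition $\d f_{n-k}\wedge\w_k=0$, plus contributions absorbed by the pole locus of $f_{n-k}$. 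In this way the problem on $S^k_p$ with $n-k$ functions and $k$ forms reduces to problems on $S^{k-1}_p$ with $n-k+1$ functions and $k-1$ forms: the extra function in the critical branch is a polynomial coefficient of $\d f_{n-k}\wedge\w_k$ (after choosing a transversal and clearing denominators), while in the pole branch it is extracted from the numerator/denominator presentation of $f_{n-k}$.

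Applying the inductive hypothesis to each sub-problem produces mixed cycles of top dimension $k-1$ in $M$. Intersecting each with the algebraic hypersurface supplied by the new polynomial constraint, summing the critical and pole contributions, and tracking the bookkeeping along the deformation parameter $\e$ yields the required cycle $\Gamma(f\_{n-k};\w\_k)$ of top dimension $k$; writing this recursion out explicitly is what will define $\Gamma$ in Definition~\ref{def:mult-cycle}.

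The main obstacle is to carry out the Rolle--Khovanskii step \emph{in families} with the correct multiplicity bookkeeping: one must verify that, for generic $\e$, the isolated intersection points of the original deformed system converging to $p$ are faithfully counted by isolated intersection points of one of the two derived sub-systems, with no loss of count when critical pairs coalesce or when pole contributions become degenerate as $\e\to 0$. A secondary difficulty is ensuring \emph{algebraicity} of the output cycle on all of $M$: the leaves $S^i_p$ are only analytic germs, so each inductive step must pass from the pointwise analytic condition $\w_i|_{S^i}\equiv 0$ to a globally algebraic locus in $M$ (such as $\{\bigwedge\w\_i=0\}$), and projective closures are needed to turn meromorphic intermediate data into honest polynomial cycles.
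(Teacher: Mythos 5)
Your plan correctly identifies the key tool (Gabrielov's complex Rolle lemma) and the inductive skeleton, but the handling of the ``second branch'' is wrong, and this is precisely where the top-dimensional part of $\Gamma$ comes from. When Lemma~\ref{lem:rolle-gabrielov} is applied to the flat family $X=\cF[\{f\_{n-k}=0\}\cap S^{k-1}_p]$ with the form $\w_k$, the two terms on the right of~\eqref{eq:gab-rolle} are not symmetric in the way you describe: the first ``critical'' branch does trade $\w_k$ for the new polynomial $g$, giving a system with $n-k+1$ functions and $k-1$ forms, but the second branch $\mult_p(\d H;X)$ does \emph{not} remove a form --- it replaces $\w_k$ by a generic \emph{constant} one-form $\d H$, so the system still has $k$ one-forms and the same $n-k$ functions. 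Your description of this branch as a ``pole contribution'' extracted from a ``numerator/denominator presentation'' of $f_{n-k}$ has no counterpart in the actual lemma (the $f_i$ are polynomials here; there is nothing to clear). Consequently you cannot close the induction after a single application: the paper instead iterates Lemma~\ref{lem:pfaff-rolle} $k$ times (Lemma~\ref{lem:mc-inductive-step}), at each stage replacing one more $\w_i$ by a generic $\d H_i$ and spawning one new critical sub-system, until the terminal term $\mult_p(\~f\_{n-k};\d H\_k)$ is the multiplicity of the flat limit of $\{\~f\_{n-k}=0\}$, a genuine $k$-dimensional cycle. That flat limit \emph{is} the top-dimensional piece $\Gamma^{n-k}$; your recursion, which produces only cycles of dimension $\le k-1$ and then intersects them with further hypersurfaces (lowering dimension further), cannot produce it, so as written the output cycle would have the wrong top dimension and the degree bounds of Theorem~\ref{thm:mult-cycle-degs} would fail.

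Two further issues: first, you omit the smoothing deformation $\~f_i=f_i-c_ie^N$ of Lemma~\ref{lem:smooth-deform}, which is required before Gabrielov's lemma can be invoked at all (the fibers $X^\e$ must be effectively nonsingular curves meeting $S^k_p$ discretely). Second, you flag ``algebraicity of the output cycle'' as the obstacle to uniformity in $p$, but that is a red herring --- the cycle $\Gamma^{\vH}$ is algebraic by construction since it is built from the algebraic data $g_j$, $\d H_j$, and the flat limit. The actual obstacle is that the genericity required of $\vH$ (the auxiliary one-forms $\d H_j$, the smoothing parameters $c_i$, and the exponent $N$) a priori depends on $p$. The paper resolves this by a separate compactness argument (\secref{sec:sc-bounds}): first verify the bound for any finite set of points $P$ by choosing $\vH$ generically enough for all of them, then observe that $p\mapsto\mult_p\Gamma^{\vH}$ is upper semicontinuous with uniformly bounded complexity (Proposition~\ref{prop:mc-complexity}), and invoke Proposition~\ref{prop:sc-compact} to find a finite test set $P$ whose validity implies validity everywhere. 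Without this step, the pointwise estimates never assemble into a single cycle valid for all $p\in U$.
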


We denote by $\deg(f\_{n-k};\w\_k):=(\beta\_{n-k};\alpha\_k)$
where $\deg f_i=\beta_i$ and $\deg\w_j=\alpha_j$ (where the
degree of a one-form is understood to be the maximal degree
of any of its coefficients in the standard coordinates).

\begin{Thm}\label{thm:mult-cycle-degs}
  Let $\deg(f\_{n-k};\w\_k):=(\beta\_{n-k};\alpha\_k)$. Then
  \begin{equation}
    \deg\Gamma^{n-j}(f\_{n-k},\w\_k) \le \frac{k!}{j!} 2^{(k-j)(k-j-1)/2} \beta_1\cdots\beta_{n-k} S^{k-j}
  \end{equation}
  where
  \begin{equation}
    S := \alpha_1+\cdots+\alpha_k+\beta_1+\cdots+\beta_{n-k}.
  \end{equation}
\end{Thm}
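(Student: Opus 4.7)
The plan is to prove the degree bound by induction on $k$, reading off the recursive structure of $\G(f\_{n-k};\w\_k)$ from Definition~\ref{def:mult-cycle}. The base case $k=0$ is immediate: no forms are present, $\G$ reduces to the usual intersection cycle $[f_1=0]\cdot\ldots\cdot[f_n=0]$, and Bezout gives $\deg\G^n\le\beta_1\cdots\beta_n$, matching the asserted formula at $j=k=0$ since $\tfrac{0!}{0!}\,2^{0}\,S^{0}=1$.

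For the inductive step $k-1\Rightarrow k$, I would analyze how a single Gabrielov--Rolle--Khovanskii reduction propagates through $\G$. This step reduces the number of forms by one at the cost of introducing a new polynomial equation built by contracting a derivative $\d f_i$ against an appropriate wedge of the $\w_j$'s. Schematically,
\[
\G(f\_{n-k};\w\_k) \;=\; \sum_\nu \G\!\left(g^{(\nu)}\_{n-k+1};\w\_{k-1}\right),
\]
where each new polynomial $g^{(\nu)}_\ast$ is either one of the original $f_i$'s or a "critical" polynomial of degree at most $S=\sum\alpha+\sum\beta$ (the key point being that wedging against $\w_j$'s absorbs one degree of differentiation, so the aggregate degree does not exceed $S$). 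Applying the inductive hypothesis to each summand in codimension $n-j$, then iterating until the base case applies at level $j$, bounds $\deg\G^{n-j}$ by a product containing $n-k$ factors of the $\beta_i$'s together with $k-j$ factors of $S$, weighted by a combinatorial multiplicity.

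The closed-form coefficient $\tfrac{k!}{j!}\,2^{(k-j)(k-j-1)/2}$ then arises from two separate sources in the recursion: the factor $\tfrac{k!}{j!}=k(k-1)\cdots(j+1)$ counts the successive choices of which polynomial to peel off at each of the $k-j$ recursive levels, while the power of $2$ reflects the fact that the $r$-th recursion step (counting from the base upwards) splits each existing cycle into at most $2^{r-1}$ pieces, corresponding to the bulk-versus-boundary branches inherent in Gabrielov's complex Rolle construction. Telescoping the per-step estimate
\[
\deg\G^{n-j}(\text{level }k)\;\le\;k\cdot 2^{k-j-1}\cdot S\cdot\deg\G^{n-j}(\text{level }k-1,\text{ enlarged system})
\]
from level $k$ down to level $j$ produces exactly the stated coefficient.

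The main obstacle is the careful combinatorial bookkeeping at each level: verifying (i) that every newly introduced polynomial really has degree at most $S$, rather than accumulating additively over levels, and (ii) that the number of branches at the $r$-th level is no larger than $2^{r-1}$. Both are features of the specific contraction/wedge operations used to define $\G$ in Definition~\ref{def:mult-cycle}, and once they are established the degree estimate follows from routine induction and the base Bezout bound $\beta_1\cdots\beta_{n-k}$ at the bottom of the recursion.
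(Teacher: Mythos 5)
Your overall strategy — induction on the recursive structure of Definition~\ref{def:mult-cycle}, with the $j=k$ (flat-limit) piece as the base Bezout case — is the same as the paper's, and you telescope to the right closed form. However, the two bookkeeping claims you flag as the "main obstacles," (i) and (ii), are both false, and they would sink the argument if carried out as stated: you have essentially swapped the sources of the factors $\tfrac{k!}{j!}$ and $2^{(k-j)(k-j-1)/2}$.

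On (i): the new critical polynomial at recursion depth $r$ does \emph{not} have degree bounded by $S$. The recursion $\Gamma^\vH(f\_{n-k};\w\_k)=\Gamma+\sum_{j'}\Gamma^\vH(\~f\_{n-k},g_{j'};\ldots)$ feeds the freshly created $g_{j'}$ back into the polynomial list, so at the next level the wedge producing the next critical polynomial includes $\d g_{j'}$. Its contribution to the new degree is $\approx\deg g_{j'}$, so the degree sum roughly doubles each step: the polynomial degrees along one branch after $r$ steps are $(\beta\_{n-k},S,2S,\dots,2^{r-1}S)$. This is exactly what the paper records ("degrees bounded by $(\beta\_{n-k},S,2S;\alpha\_{k-2})$ and so on"), and the product $S\cdot 2S\cdots 2^{k-j-1}S=S^{k-j}2^{(k-j)(k-j-1)/2}$ is where the power of $2$ genuinely comes from. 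If the degrees really stayed at $S$, you would produce $S^{k-j}$ with no $2$-power at all and the formula would not emerge.

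On (ii): the branching at each recursion level is controlled by the index $j'$ running over $\{1,\dots,k-1\}$ in Definition~\ref{def:mult-cycle}, so each step produces at most $k$ sub-cycles (and the number of forms drops by one); after $k-j$ steps this gives at most $k(k-1)\cdots(j+1)=k!/j!$ contributing cycles. There is no $2^{r-1}$-way "bulk-versus-boundary" split of a single cycle at level $r$ — Gabrielov's lemma contributes a \emph{two}-term estimate at each step, and the recursion already unwinds that; the number of resulting cycles grows by a factor $\le k$ per level, not by powers of $2$. So the combinatorial coefficient $k!/j!$ is the branch count, while the power of $2$ is the degree-growth count — the opposite of what you wrote. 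To repair the proof, replace your verification goals (i) and (ii) by: (i$'$) the critical polynomial at depth $r$ has degree at most $2^{r-1}S$; (ii$'$) the number of sub-cycles at depth $r$ is at most $k(k-1)\cdots(k-r+1)$. With those corrected inputs your telescoping computation gives the stated bound.

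Also a small point on the base case: for general $k>0$ the $j=k$ base corresponds not to the $k=0$ situation but to the flat-limit cycle $\Gamma$ appearing in the first recursion level of Definition~\ref{def:mult-cycle}; and that cycle is the flat limit of $\{\tilde f\_{n-k}=0\}$ as $e\to0$, not the literal proper intersection $[f_1=0]\cdots[f_{n-k}=0]$. Bezout still gives $\deg\le\beta_1\cdots\beta_{n-k}$, so the bound is unchanged, but the object is a degeneration, not the intersection cycle itself.
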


We sometimes consider the degrees of $f\_{n-k},\w\_k$ as the main
asymptotic while fixing all other parameters. We give another form of
the bound which stresses this particular asymptotic.

\begin{Cor}\label{cor:mc-degs-simple}
  Let the degrees of $f\_{n-k}$ and $\w\_k$ be bounded by an integer
  $d$. Then
  \begin{equation}
    \deg\Gamma^{n-j}(f\_{n-k},\w\_k) \le C_{n,k,j} d^{n-j}
  \end{equation}
  where
  \begin{equation}
    C_{n,k,j}:=\frac{k!}{j!} 2^{(k-j)(k-j-1)/2} n^{k-j}.
  \end{equation}
\end{Cor}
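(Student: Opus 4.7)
The plan is to deduce the corollary as a direct specialization of Theorem~\ref{thm:mult-cycle-degs} under the uniform degree bound $\alpha_i,\beta_j\le d$. There is essentially nothing to prove beyond bookkeeping; the whole content is to replace each of the $n$ degrees appearing in the estimate of Theorem~\ref{thm:mult-cycle-degs} by the upper bound $d$ and collect powers.

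Concretely, I would proceed as follows. First, since there are $n-k$ factors $\beta_i$, the product $\beta_1\cdots\beta_{n-k}$ is bounded by $d^{n-k}$. Second, since $S$ is the sum of $k+(n-k)=n$ of the $\alpha_i$'s and $\beta_j$'s, we have $S\le nd$, hence $S^{k-j}\le n^{k-j}d^{k-j}$. Plugging these two bounds into the estimate of Theorem~\ref{thm:mult-cycle-degs} yields
\begin{equation}
\deg\Gamma^{n-j}(f\_{n-k},\w\_k)\le \frac{k!}{j!}\,2^{(k-j)(k-j-1)/2}\,d^{n-k}\cdot n^{k-j}d^{k-j}=C_{n,k,j}\,d^{n-j},
\end{equation}
with $C_{n,k,j}$ as defined in the statement.

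There is no genuine obstacle here; the only thing to be careful about is that the factor of $n$ in $S\le nd$ really comes from the \emph{total} number of forms and functions being $n$ (which is consistent with the dimensional constraint $(n-k)+k=n$ built into the hypotheses of Theorem~\ref{thm:mult-cycle}), so the exponent in $n^{k-j}$ is correct and does not require $n$ to be replaced by any larger quantity.
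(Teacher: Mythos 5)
Your proof is correct and is exactly the intended specialization: substitute $\beta_i,\alpha_i\le d$ into Theorem~\ref{thm:mult-cycle-degs}, use $\beta_1\cdots\beta_{n-k}\le d^{n-k}$ and $S\le nd$, and collect powers of $d$. The paper gives no separate argument for this corollary, and there is none needed; the bookkeeping matches.
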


\subsection{Structure of this paper}

In~\secref{sec:background} we survey the Rolle-Khovanskii lemma for
real Pfaffian systems and its basic application in the theory of real
Fewnomials; and the complex analog of this lemma due to Gabrielov.
In~\secref{sec:mc} we present the construction of the multiplicity
cycles for Pfaffian systems and prove the main theorems of the paper.
In~\secref{sec:applications} we discuss various application of the
main theorem --- to the topological complexity of Milnor fibers
in~\secref{sec:mf-estimates}; to multiplicity estimates for
trajectories of non-singular vector fields
in~\secref{sec:vf-estimates}; and to multiplicity estimates on group
varieties in~\secref{sec:group-estimates}. Finally,
in~\secref{sec:sc-bounds} we present an auxiliary compactness result
for multiplicity cycles which is used throughout the paper.

\section{Background}
\label{sec:background}

\subsection{Real Pfaffian systems}

Integrable systems of Pfaffian equations were first studied by
Khovanskii \cite{Khovanskii:Fewnomials} in the real domain, giving
rise to the theory of Fewnomials. We briefly recall the basic elements
of this theory in a context suitable for comparison with the present
paper. Our presentation is thus restricted in scope, and we refer the
reader to \cite{Khovanskii:Fewnomials} for a complete account.

Let $M$ be a real manifold of dimension $n$ and $\w\in\Lambda^1(M)$ a
one-form. We say that $S$ is a \emph{separating solution} for $\w$
with the coorientation defined by $\w$, if:
\begin{enumerate}
\item $S$ has codimension $1$ and $\w\rest S\equiv0$.
\item There exists a manifold $F\subset M$, called the \emph{film},
  such that $\partial F=S$ and $\w$ takes positive values on the
  vector pointing inside $F$ at every point of $S$.
\end{enumerate}

Let $f\_{n-1}$ be smooth functions, and $\Gamma$ their set of common
zeros. We assume for simplicity that $\Gamma$ is a smooth complete
intersection curve meeting $S$ transversally\footnote{this restriction
  can be relaxed significantly by a perturbation argument which we
  omit for simplicity}. The following result forms the basis of the
theory of real Fewnomials.

\begin{Lem}[Rolle-Khovanskii, \protect{\cite[III.4 Corollary~2]{Khovanskii:Fewnomials}}] \label{lem:rolle-khovnaskii}
  Let $B$ denote the number of non-compact components of $\Gamma$
  and $N$ denote the number of zeros of $\w\rest\Gamma$. Then the number
  of points in $S\cap\Gamma$ is bounded by $B+N$.
\end{Lem}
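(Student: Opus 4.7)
The plan is to run a Rolle-type argument along the curve $\Gamma$ using the coorientation supplied by the film $F$.

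First I would examine the local picture at a single point $q\in S\cap\Gamma$. By the transversality assumption the tangent vector $v$ to $\Gamma$ at $q$ is not tangent to $S$, so $\w(v)\neq0$. Since $S=\partial F$ and $\w$ is positive on the vector pointing into $F$, the sign of $\w$ on the tangent to $\Gamma$ at $q$ is positive precisely when $\Gamma$ crosses from the complement of $F$ into $F$, and negative when it crosses in the other direction.

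Next I would walk along a single connected component $C$ of $\Gamma$, parametrized by an interval or a circle. Each intersection of $C$ with $S$ is a boundary crossing of $F$, and consecutive crossings must go in opposite directions (into $F$ then out of $F$, etc.). By the local computation above, the sign of $\w\rest\Gamma$ at consecutive points of $C\cap S$ therefore alternates. Applying the intermediate value theorem to the smooth function $\w\rest\Gamma$ on the arc between two consecutive intersection points produces a zero of $\w\rest\Gamma$ on that arc.

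Finally I would count. On a compact component $C$ with $k$ intersections with $S$, the intersections cut $C$ into $k$ arcs and each arc contributes a zero of $\w\rest\Gamma$, so $k\le N_C$. On a non-compact component $C$ with $k$ intersections, the intersections cut $C$ into $k+1$ arcs, only $k-1$ of which are bounded and forced to contain a zero; so $k\le N_C+1$. Summing over all components yields
\begin{equation*}
\#(S\cap\Gamma)\;=\;\sum_C k_C\;\le\;\sum_{C\text{ compact}}N_C+\sum_{C\text{ non-compact}}(N_C+1)\;\le\;N+B,
\end{equation*}
which is the desired bound.

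The main obstacle is not conceptual but verificational: I must be sure that the sign-alternation actually holds in the absence of extra hypotheses. The key input is that $S$ is a true \emph{separating} solution, i.e.\ a boundary of a film with a consistent coorientation — without this one could have tangential or same-sided returns of $\Gamma$ to $S$ and the alternation would fail. The transversality of $\Gamma$ with $S$ (stipulated in the lemma) handles the possibility of non-simple crossings; in the more general situation indicated in the footnote one perturbs $\Gamma$ slightly to reduce to the transverse case and checks that the bound is preserved in the limit.
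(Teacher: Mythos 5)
Your proposal is correct and follows the same approach as the paper: use the coorientation provided by the film $F$ to show that $\w\rest\Gamma$ alternates sign at consecutive intersection points, invoke Rolle/IVT to produce a zero on each intervening arc, and then count, distinguishing compact components (where the arcs and intersection points are equinumerous) from non-compact ones (where the two unbounded ends cost one extra intersection each). The paper leaves the final count as "a simple counting argument"; you have simply spelled it out, and your accounting of compact versus non-compact components is the correct way to do so.
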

\begin{proof}
  Choose any orientation for $\Gamma$, and let $p,q\in\Gamma\cap S$ be
  two adjacent points of intersection (i.e. such that the piece of
  $\Gamma$ lying between $p$ and $q$, denoted $\Gamma_{p,q}$, does not
  contain points of $S$). It follows that the positive tangent vector
  to $\Gamma$ points inside the film $F$ at $p$ and outside the film
  $F$ at $q$ or vice versa. By condition (2) above, this means that
  $\w$ changes sign between $p$ and $q$, hence by the classical Rolle
  lemma must have a zero on $\Gamma$ between the two. A simple
  counting argument concludes the proof.
\end{proof}

Assume now that $M=\R^n$. In this case the number of non-compact
components $B$ in Lemma~\ref{lem:rolle-khovnaskii} can also be
algebraically estimated as follows.

\begin{Lem}[\protect{\cite[Lemma on page 11]{Khovanskii:Fewnomials}}]\label{lem:real-components}
  The exists an affine hyperplane $H$ such that the number of
  non-compact components of $\Gamma$ does not exceed the number
  of intersections between $\Gamma$ and $H$.
\end{Lem}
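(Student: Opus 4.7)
The plan is to apply a generic linear projection and use a pigeonhole argument on the topological ends of $\Gamma$. Since $\Gamma$ is a smooth $1$-dimensional complete intersection, each of its connected components is a smooth $1$-manifold without boundary, hence diffeomorphic either to $S^1$ (compact) or to $\R$ (non-compact). In particular each non-compact component has exactly two topological ends, so $\Gamma$ has $2N$ ends in total, where $N$ denotes the number of non-compact components.

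Next, I would pick a direction $v \in S^{n-1}$ not orthogonal to the asymptotic direction of any end of $\Gamma$; since the ends have finitely many asymptotic directions, this is a generic condition on $v$. For such a $v$, the linear function $\ell_v(x) := \langle v, x\rangle$ restricted to any end of $\Gamma$ tends to $+\infty$ or to $-\infty$, according to the sign of $\langle v, \cdot\rangle$ on the end's asymptotic direction. Writing $E^+_v$ and $E^-_v$ for the sets of ends going to $+\infty$ and to $-\infty$ respectively, we have $|E^+_v| + |E^-_v| = 2N$, so $\max(|E^+_v|, |E^-_v|) \ge N$. Assume without loss of generality that $|E^+_v| \ge N$, and take $c$ larger than all critical values of $\ell_v|_\Gamma$ and larger than $\ell_v$ on every compact component. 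The hyperplane $H := \{x : \ell_v(x) = c\}$ then meets $\Gamma$ transversally at exactly one point per end in $E^+_v$, giving $|H \cap \Gamma| = |E^+_v| \ge N$.

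The main obstacle is the end-count per non-compact component: each such component must be diffeomorphic to $\R$ and thus have exactly two topological ends. This relies on $\Gamma$ being a closed smooth $1$-manifold, which is ensured by the transverse complete-intersection hypothesis. A secondary step is verifying the existence of well-defined asymptotic directions for the ends and the finiteness of the critical values of $\ell_v|_\Gamma$ for generic $v$; both follow from standard Sard-type arguments applied to the underlying (real-analytic or Pfaffian) structure of the defining functions $f_1, \ldots, f_{n-1}$, which also guarantees that for $c$ large the level set $\{\ell_v = c\}$ intersects each component $C$ of $\Gamma$ in precisely as many points as $C$ has ends going to $+\infty$.
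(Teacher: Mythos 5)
The paper does not actually prove this lemma; it is cited verbatim from Khovanskii's \emph{Fewnomials} (Lemma on page 11), so there is no in-paper argument to compare against.

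Your argument is essentially the standard one and the overall structure is sound: a closed properly embedded $1$-manifold decomposes into circles and copies of $\R$, each non-compact component contributes two ends, and for a generic direction $v$ the far-out level set $\{\ell_v = c\}$ picks up exactly one crossing per end that escapes to $+\infty$. The end-count $|H\cap\Gamma|=|E_v^+|\ge N$ then follows (and your extra condition that $c$ exceed $\ell_v$ on the compact components is unnecessary, since extra intersections only help, but it is harmless). There is, however, one imprecision worth flagging. You justify both the existence of asymptotic directions and the finiteness (really, boundedness) of the critical value set by ``Sard-type arguments.'' Sard gives measure zero, not finiteness, and it certainly does not prevent critical values from accumulating at infinity; likewise, an end of a merely smooth proper curve need not have a well-defined asymptotic direction on $S^{n-1}$ (think of a spiral whose radial projection fails to converge). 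Both facts are true in the setting this lemma actually lives in---Pfaffian, semi-algebraic, or more generally o-minimal curves---but they come from tameness/finiteness properties (the monotonicity theorem, curve selection, uniform finiteness of fibers), not from Sard. Since the paper's surrounding text states the hypotheses only for ``smooth'' $f\_{n-1}$, it would be cleaner to say explicitly that the tame structure of $\Gamma$ is what makes the end analysis and the choice of $c$ above all critical values legitimate. With that correction the proof is complete and, as far as I can tell, matches the spirit of Khovanskii's own argument.
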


Consider now a sequence of one-forms $\w\_k\in\Lambda^1(M)$ and a
chain of submanifolds $S^k\subset\cdots\subset S^1\subset S^0=M$ where
each inclusion $S^{i+1}\subset S^i$ forms a separating solution for
$\w_i$. Let $f\_{n-k}$ be a sequence of polynomials, which we shall
assume to be sufficiently generic\footnotemark[1].
Lemmas~\ref{lem:rolle-khovnaskii} and~\ref{lem:real-components}
suggest the following estimate for the number of intersections between
$S^k$ and $f\_{n-k}=0$,
\begin{multline} \label{real-few-induction}
  \#(S^k\cap f\_{n-k}) \le \#(S^{k-1}\cap\{f\_{n-k}=g_{n-k+1}=0\})\\
  +\#(S^{k-1}\cap\{f\_{n-k}=H=0\})
\end{multline}
where
\begin{equation}
  g := \frac{\bigwedge\w\_k \wedge \bigwedge\d f\_{1-k}}{\bigwedge\d x\_n}.
\end{equation}
and $H$ is the hyperplane whose existence is guaranteed by
Lemma~\ref{lem:real-components}. It is sometimes customary to include
$H$ in the definition of $g$ and avoid the second summand
in~\eqref{real-few-induction}. However, for the purposes of comparison
with the present paper the form above is more convenient. In
particular, a simple inductive argument gives the following.

\begin{Thm}\label{thm:real-mult-cycle}
  There exist a mixed algebraic cycle $\Gamma:=\Gamma(f\_{n-k},\w\_k)$
  in $M$ such that
  \begin{equation}
    \#(S^k\cap f\_{n-k}) \le \deg\Gamma(f\_{n-k},\w\_k).
  \end{equation}
  Moreover, $\Gamma$ is obtained algebraically from $f\_{n-k},\w\_k$.
\end{Thm}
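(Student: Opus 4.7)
The strategy is induction on the number $k$ of Pfaffian forms, with the Rolle--Khovanskii inequality~\eqref{real-few-induction} providing the recursion; the cycle $\Gamma$ is built to mirror that inequality, one summand per iteration of the Rolle argument and one summand per application of Lemma~\ref{lem:real-components}.

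For the base case $k=0$, take $\Gamma := [f_1=0]\cdot\ldots\cdot[f_n=0]$, the intersection cycle in $M=\R^n$; the number of real common zeros is then majorized by the total complex intersection number $\deg\Gamma$ via the classical Bezout theorem. For the inductive step, suppose the theorem holds for systems with $k-1$ Pfaffian forms, and consider data $(f\_{n-k};\w\_k)$ with chain $S^k\subset\cdots\subset S^0$. The inequality~\eqref{real-few-induction} bounds $\#(S^k\cap f\_{n-k})$ by the sum of two intersection counts on $S^{k-1}$, each involving $n-(k-1)$ polynomials --- the original $f\_{n-k}$ augmented by either $g$ or the hyperplane $H$ from Lemma~\ref{lem:real-components} --- against the shortened chain $\w\_{k-1}$. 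The inductive hypothesis supplies cycles $\Gamma_g$ and $\Gamma_H$ majorizing these two counts, and we define
\begin{equation*}
\Gamma(f\_{n-k};\w\_k) := \Gamma_g + \Gamma_H.
\end{equation*}
Since $g$ is produced from $f\_{n-k}$ and $\w\_k$ via the explicit exterior-derivative formula, and $H$ is a linear polynomial whose existence Lemma~\ref{lem:real-components} guarantees, the construction is algebraic in the input data.

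The main obstacle is the genericity hypothesis footnoted after~\eqref{real-few-induction}: the common zero locus of the $n-k+1$ polynomials on $S^{k-1}$ must be a smooth complete intersection curve meeting $S^k$ transversally, a condition that can fail for arbitrary $f_i$. The standard remedy is a perturbation argument --- replace each $f_i$ by $f_i+t\,\tilde f_i$ for a small parameter $t$ and a generic polynomial $\tilde f_i$ of the same degree, apply the inductive construction for $t\ne0$ sufficiently small to bound the perturbed intersection count, and pass to the limit $t\to0$ using upper semicontinuity of the isolated intersection count together with uniform degree bounds on the family of cycles produced. A subsequential limit in the space of effective cycles of bounded degree then yields the required bound in full generality.
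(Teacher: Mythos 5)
Your proposal is correct and follows the paper's own sketch: iterate the Rolle--Khovanskii recursion~\eqref{real-few-induction} $k$ times to reduce to purely algebraic zero counts, handle the genericity requirement by perturbation (which the paper relegates to a footnote), and collect the resulting intersection cycles into $\Gamma$. The only cosmetic difference is that your recursion bottoms out in a purely zero-dimensional cycle, whereas the paper keeps the positive-dimensional varieties $\Gamma_h$ before slicing with the hyperplanes $H_1,\ldots,H_r$ --- a choice the paper itself remarks is immaterial in the global real case and is made only to set up the formal analogy with Theorem~\ref{thm:mult-cycle}.
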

\begin{proof}[Sketch of proof]
  After $k$ inductive applications of~\eqref{real-few-induction}, one
  obtains various summands of the form
  \begin{equation}
    \#(\{h_1=\cdots=h_{n-r}=H_1=\cdots=H_r=0\})
  \end{equation}
  where $h_i$ are polynomials and $H_i$ are linear functions.
  This quantity is certainly bounded by $\deg\Gamma_h$ where
  $\Gamma_h:=\{h_1=\cdots=h_{n-r}=0\}$, and collecting all such cycles
  into $\Gamma$ we obtain the estimate.
\end{proof}

It may appear unclear why one should consider components of different
dimensions in $\Gamma(f\_{n-k},\w\_k)$ when it is equally possible to
replace each of the positive dimensional components $\Gamma_h$ in the
proof above by its zero-dimensional intersection with
$H_1=\cdots=H_r=0$. The reader may note the formal analogy between
Theorems~\ref{thm:mult-cycle} and~\ref{thm:real-mult-cycle}. We will
see that in the local case, the decomposition into components of
various dimensions plays a more principal role.

\subsection{Complex Pfaffian systems}

In the complex setting one can no longer expect a global estimate of
the type given in Theorem~\ref{thm:real-mult-cycle}. However, a local
analog has been developed by Gabrielov \cite{gabrielov:MultPfaffian}.
This was later used to establish various estimates on the geometric
complexity of sets defined using Pfaffian functions (see
\cite{GV:PfaffianComplexity} for a survey). We recall the fundamental
result from this paper, namely a local complex analog of
Lemma~\ref{lem:rolle-khovnaskii}.

Let $M$ be a complex manifold of dimension $n$ and $\w\in\Lambda^1(M)$
a one-form. Let $p\in M$ and suppose that $\w$ admits an integral
manifold $S_p$ through $p$. Finally, let $X\subset M\times\C_e$ be a
(reduced) flat family, $\dim X=2$. We fix any analytic coordinate
system $x_1,\ldots,x_n$ around $p$.

\begin{Lem}[\protect{\cite[Theorem 1.2]{gabrielov:MultPfaffian}}] \label{lem:rolle-gabrielov}
  Assume that in a neighborhood of $p$, $X^\e\cap S_p$ consists of
  isolated points for small $\e\neq0$. Let $\d H$ be a generic
  constant one-form. Then
  \begin{equation}\label{eq:gab-rolle}
    \mult_p(\w;X) \le \mult_p(g;X)+\mult_p(\d H;X)
  \end{equation}
  where
  \begin{equation} \label{eq:gab-g}
    g = \frac{ \w\wedge (\d e+ce\d H)\rest X }
             {\d H\wedge\d e \rest X}
  \end{equation}
  and $c$ is a generic complex number.
\end{Lem}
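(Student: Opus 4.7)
The plan is to reduce the lemma to a one-dimensional argument-principle estimate on a Riemann surface embedded in $X$, obtained by deforming the fibres $X^\e$ into the integral leaves of the auxiliary form $\d e+ce\,\d H$.

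First, integrability of $\w$ at $p$ lets me write $\w=u\,\d\f$ near $p$ for a unit $u$ and a holomorphic $\f$ with $\f(p)=0$, so that $S_p=\{\f=0\}$ germ-wise and $\mult_p(\w;X)$ is the deformation multiplicity of $\{\f=0\}\cap X$, namely the number of zeros of $\f|_{X^\e}$ accumulating at $p$ as $\e\to 0$. Next, I introduce the family of deformed fibres
\[
L_\lambda := \{e\exp(cH)=\lambda\}\cap X,
\]
which are the integral leaves in $X$ of the (automatically integrable) one-form $\d e+ce\,\d H$. For $\lambda=\e\exp(cH(p))$ the curve $L_\lambda$ agrees with $X^\e$ up to an $O(\e)$ perturbation in the $H$-direction, and using that $\d H\wedge\d e|_X$ is nonvanishing at $(p,0)$ for generic $\d H$, a standard flat-family argument shows that the deformation multiplicity of $\f$ along the curves $L_\lambda$ equals the deformation multiplicity of $\f$ along the curves $X^\e$.

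The heart of the proof is then a complex Rolle step on the Riemann surface $L_\lambda$. The critical points of $\f|_{L_\lambda}$ are exactly the points where $\d\f\wedge(\d e+ce\,\d H)$ vanishes on $TX$, and after dividing by the nonvanishing $\d H\wedge\d e|_X$ and absorbing the unit $u$ these are precisely the zeros of $g$. Working in a polycylindrical neighbourhood of $(p,0)$ in the $(H,e)$-coordinates on the normalisation of $X$, the argument principle bounds the number of zeros of $\f|_{L_\lambda}$ inside the polycylinder by the number of zeros of $g|_{L_\lambda}$ inside, which contributes $\mult_p(g;X)$ in the germ limit, plus the boundary winding of $\f$ along $\partial$. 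Choosing $|\e|$ and $|\lambda|$ much smaller than the $e$-radius of the polycylinder, the $e$-boundary component contributes nothing, while the $H$-boundary contribution is bounded by the number of intersections of $L_\lambda$ with a generic level $\{H=H_0\}$ near $p$, which in the limit equals $\mult_p(\d H;X)$.

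The main obstacle is the careful bookkeeping of the boundary term: one must choose the polyradii, $\e$, and $\lambda$ in the correct order to guarantee that no zeros of $\f|_{L_\lambda}$ escape through the boundary and that the argument-principle count localises correctly at $p$. This relies on the genericity of both $\d H$ and $c$ to ensure that all relevant curves meet the boundary transversally and that $g$ has only isolated zeros on the central fibre, and is the place where the passage to the normalisation of $X$ plays its role, since it is needed to handle possible singularities of $X$ at $(p,0)$ and to make the argument principle directly applicable.
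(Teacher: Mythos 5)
Your reduction to a one-dimensional Rolle-type count on the leaves $L_\lambda$ of $\d e+ce\,\d H$ in $X$ is close in spirit to Gabrielov's argument, but the central analytic step is not correct as stated. The argument principle says that the number of zeros of $\f\rest{L_\lambda}$ inside a domain \emph{equals} the winding of $\f$ along the boundary; it does not produce a bound of the shape ``\#zeros of $\f$ $\le$ \#zeros of $g$ $+$ boundary winding.'' There is no mechanism in the argument principle that converts critical points of $\f\rest{L_\lambda}$ (i.e.\ zeros of $g$) into a summand of the zero count, so the inequality you assert at the heart of the proof does not follow from the tool you invoke.

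What is actually needed --- and what the paper's sketch uses --- is the Riemann--Hurwitz formula applied twice to the Milnor fiber $F^\e$ (once for $\pi$ with $\d\pi=\w$, once for $H$), together with the observation that the two sets so constructed have the same homotopy type, hence the same Euler characteristic. Equating the two Riemann--Hurwitz expressions
\begin{equation*}
\chi(F^\e)=\mu-\mult_p(\w\rest{X^\e};X)
\qquad\text{and}\qquad
\chi(F^\e)=\mult_p(H;X)-\mult_p(\d H\rest{X^\e};X)
\end{equation*}
is precisely the step that makes the critical points of $\f$ (via $g$) appear on the right-hand side. This Euler characteristic comparison is entirely absent from your argument, and without it the inequality~\eqref{eq:gab-rolle} does not follow. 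The boundary bookkeeping and the passage to normalization that you worry about are secondary; the missing idea is that the zero count is controlled through $\chi$ of the Milnor fiber, not through the argument principle.
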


We remark that the second summand in~\eqref{eq:gab-rolle} could be
replaced by the equivalent term $\mult_p(H;X)$, and this is the
formulation originally appearing in \cite{gabrielov:MultPfaffian}.
With this formulation, one is required to choose a generic $H$
\emph{vanishing at $p$}, whereas in the formulation above we obtain an
expression essentially uniform over $p$. However, the one-form $\d H$
must still satisfy a genericity condition possibly depending on the
point $p$, and we shall have to resolve this technical difficulty in
order to obtain truly uniform estimates over $p$.

The extra generic factor $ce\d H$ in Lemma~\ref{lem:rolle-gabrielov}
is needed in order to avoid certain degeneracies in the case where the
intersection $S_p\cap X^0$ degenerates into a non-isolated
intersection. We sketch the proof for the case where the intersection
is isolated and the generic fiber $X^\e$ is smooth. We also assume for
simplicity that $\w$ is a closed form, and hence hence $\w=\d\pi$ for
some analytic function $\pi:M\to\C$ around $p$. In this case case one
may take $c=0$, and we will in fact show equality
in~\eqref{eq:gab-rolle}. Our presentation follows that of Gabrielov
\cite{gabrielov:MultPfaffian}.

\begin{proof}[Sketch of proof]
  By assumption the fibers $X^\e$ are analytic curves. We fix a small
  positive $\delta$ and denote $D_\delta=\{\abs{z}\le\delta\}$.
  Consider the fiber
  \begin{equation}
    F^\e:=X^\e\cap\pi^{-1}(D_\delta).
  \end{equation}
  In fact, for $0<\e\ll\delta$ this fiber has the homotopy type of the
  Milnor fiber of $X$ at $p$.

  Let $\mu:=\mult(\w;X)$. Since the intersection is isolated at $e=0$
  by assumption, this means that $\pi:F^0\to D_\delta$ is a ramified
  $\mu$ to $1$ map. For fixed $\delta$ and sufficiently small $\e$,
  this remains true for $\pi:F^\e\to D_\delta$ as well. By the
  Riemann-Hurwitz formula,
  \begin{equation} \label{eq:chi-pi}
    \chi(F^\e) = \mu - \mult_p(\w\rest{X^\e};X)
  \end{equation}
  where we slightly abuse notation and allow the top-form
  $\w\rest{X^\e}$ in place of a meromorphic function in the second
  summand, which corresponds to the number of critical points of $\pi$
  on $F^\e$ (with their multiplicities).

  Arguing similarly with $H$ in place of $\pi$, we have
  \begin{equation} \label{eq:chi-H}
    \chi(F^\e) = \mult_p(H;X) - \mult_p(\d H\rest{X^\e};X).
  \end{equation}
  Note that while $F^\e$ appearing in this equation is in fact
  different (being defined using $H$ in place of $\pi$), in homotopy
  type both sets agree with the Milnor fiber of $X$ at $p$. Thus we
  may compare the Euler characteristics from~\eqref{eq:chi-pi}
  and~\eqref{eq:chi-H} to obtain
  \begin{align*}
    \mu &= \mult_p(\w\rest{X^\e};X) - \mult_p(\d H\rest{X^\e};X)+\mult_p(H;X) \\
        &= \mult_p(g;X)+\mult_p(H;X)
  \end{align*}
  and the claim follows since $\mult_p(\d H;X)=\mult_p(H;X)$.
\end{proof}

Consider now a sequence of one-forms $\w\_k\in\Lambda^1(M)$ defining
an integrable Pfaffian system at $p\in M$, and the corresponding chain
of integral submanifolds
$S^k_p\subset\cdots\subset S^1_p\subset S^0_p=M$. Let $f\_{n-k}$ be
families of polynomials on $M$. Lemma~\ref{lem:rolle-gabrielov} can be
used in a manner analogous to the use of
Lemma~\ref{real-few-induction} in the proof of
Theorem~\ref{thm:real-mult-cycle} to obtain a bound for
$\mult_p(f\_{n-k};\w\_k)$ in terms of certain algebraic systems of
equations, and more specifically the number of solutions for these
equations converging to $p$ (see
\cite[Theorem~2.1]{gabrielov:MultPfaffian}).

\section{The multiplicity cycles}
\label{sec:mc}

Let $M=\C^n$. Let $\w\_k\in\Lambda^1(M)$ be a sequence of one-forms
with polynomial coefficients and $U\subset M$ a set such that $\w\_k$
defines an integrable Pfaffian system at $p$ for each $p\in U$, and
denote the corresponding chain of integral submanifolds by
$S^k_p\subset\cdots\subset S^1_p\subset S^0_p=M$. Let $f\_{n-k}$ be
families of polynomials on $M$.

In this section we describe the explicit construction of the
multiplicity cycle $\Gamma(f\_{n-k};\w\_k)$ and prove
Theorem~\ref{thm:mult-cycle}. We begin with a technical result on
smoothing deformations which was used in \cite[Proof of
Theorem~2.1]{gabrielov:MultPfaffian}.

\begin{Lem}\label{lem:smooth-deform}
  Let $p\in U$. For any $N\in N$ and $\vc=c\_{n-k}\in\C$ define the
  sequence $\~f\_{n-k}$ by $\~f_i=f_i-c_i e^N$. Then for sufficiently
  large $N$ and generic $\vc$ we have
  \begin{equation}\label{eq:f-vs-ft}
    \mult_p(f\_{n-k};\w\_k) \le \mult_p(\~f\_{n-k};\w\_k)
  \end{equation}
  and moreover, the intersection
  $\{\tilde f^\e\_{n-k}=0\}\cap S_p^{k-1}$ is an effectively
  non-singular curve intersecting $S_p^k$ discretely (in a
  neighborhood of $p$) for sufficiently small $\e\neq0$.
\end{Lem}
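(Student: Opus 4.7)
My plan is to treat the two assertions separately: the inequality \eqref{eq:f-vs-ft} follows from conservation of intersection numbers under small analytic perturbations (which is where the largeness of $N$ enters), while the smoothness and discrete intersection statements follow from a parametric Sard argument applied to an enlarged family in which $\vc$ plays the role of an auxiliary parameter (which is where the genericity of $\vc$ enters).

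\textbf{Inequality.} First I would fix $\e\neq0$ small and observe that $\~f^\e\_{n-k}$ differs from $f^\e\_{n-k}$ only by the constants $-c_i\e^N$, which for $N$ large are arbitrarily small uniformly in $\vc$ and in $\e\to 0$. Each isolated point $q$ of $\{f^\e\_{n-k}=0\}\cap S_p^k$ of intersection multiplicity $m_q$ contributes $m_q$ to $\mult^e_p(f\_{n-k};\w\_k)$ and converges to $p$ as $\e\to0$. By upper semi-continuity of proper intersection numbers (conservation of number), in a sufficiently small neighbourhood of $q$ the perturbed intersection $\{\~f^\e\_{n-k}=0\}\cap S_p^k$ decomposes into isolated components whose local multiplicities sum to at least $m_q$; since the perturbation is of order $\e^N$, these points still converge to $p$ and hence contribute to $\mult^e_p(\~f\_{n-k};\w\_k)$. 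Summing over all such $q$ yields \eqref{eq:f-vs-ft}.

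\textbf{Smoothness.} For the second assertion I would enlarge the ambient space by treating $\vc\in\C^{n-k}$ as a parameter and study the analytic map
\begin{equation*}
\hat F\colon S_p^{k-1}\times\C_e\times\C^{n-k}\longrightarrow\C^{n-k},\qquad \hat F(x,e,\vc)=f(x,e)-\vc\,e^N,
\end{equation*}
on appropriate small representatives. On the open set $\{e\neq0\}$ the Jacobian of $\hat F$ in the $\vc$-directions alone equals $(-e^N)^{n-k}\neq0$, so $\hat F$ is a submersion there and $Z:=\hat F^{-1}(0)\cap\{e\neq0\}$ is a complex submanifold of dimension $n-k+2$. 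Applying Sard's theorem to the projection $\pi_\vc\colon Z\to\C^{n-k}$, for a generic $\vc_0$ the fibre $Z_{\vc_0}$ is a smooth complex surface in $S_p^{k-1}\times\C_e^*$. A second application of Sard to the $e$-projection $Z_{\vc_0}\to\C_e^*$ shows that its critical values form a measure-zero analytic subset of $\C_e^*$, hence discrete; shrinking the $\e$-disk to avoid them makes every fibre $\{\~f^\e\_{n-k}=0\}\cap S_p^{k-1}$, for small $\e\neq0$, a smooth complex curve.

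\textbf{Discreteness and main obstacle.} The intersection of this smooth curve with $S_p^k\subset S_p^{k-1}$ has expected dimension~$0$; the locus of $\vc_0$ for which the curve is contained in $S_p^k$ or becomes tangent to $S_p^k$ near $p$ forms another proper subset of $\C^{n-k}$, excluded by one more Sard-type genericity, so for $\vc$ outside the finite union of exceptional sets identified above both assertions hold. The main technical obstacle is enforcing the smoothness uniformly over \emph{all} small $\e\neq0$ rather than for each $\e$ individually; this is handled by the analyticity of every critical locus involved, which forces its trace on the one-parameter family $\e\mapsto\vc_0\e^N$ to be an analytic subset of $\C_e^*$ and, being proper for generic $\vc_0$, discrete in $\e$, so shrinking the $\e$-disk avoids every bad value.
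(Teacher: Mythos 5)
Your proof takes essentially the same route as the paper's: the inequality is obtained by conservation of intersection numbers under a perturbation of order $\e^N$ with $N$ large, and the non-singularity and discreteness of $\{\tilde f^\e\_{n-k}=0\}\cap S_p^{k-1}$ are obtained by a parametric Sard/Bertini argument in $\vc$. Your write-up is more explicit than the paper's brief proof; the only small imprecision is the claim that ``discrete in $\C_e^*$'' suffices to shrink the disk (a discrete analytic subset of a punctured disk can still accumulate at $0$) --- the correct justification is that the bad locus is analytic in a full neighbourhood of $e=0$ and proper, hence finite near $0$; also, tangency of the curve to $S_p^k$ is not actually an obstruction to discreteness, only containment is, so that extra exclusion is unnecessary though harmless.
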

\begin{proof}
  Standard analytic arguments show that each isolated point of
  $f\_{n-k}=0$ on $S^k$ converging to $p$ as $e\to0$ survives the
  perturbation $f\to\~ f$ as long as $N$ is large enough (possibly
  bifurcating into a number of points of the same total multiplicity).
  This ensures~\eqref{eq:f-vs-ft}.

  To satisfy the non-singularity condition it is enough to verify that
  $c\_{n-k}\e^N$ is not a critical value for $f^\e\_{n-k}$ on
  $S^{k-1}_p$, which by the Bertini-Sard theorem is certainly true for
  generic $\vc$ and sufficiently small $\e\neq0$. Similarly one
  verifies that the intersection $\{\~f^\e\_{n-k}=0\}\cap S^k_p$ is
  discrete for generic $\vc$ and sufficiently small $\e\neq0$.
\end{proof}

The construction of the multiplicity cycle is based on an inductive
process, with the following consequence of
Lemma~\ref{lem:rolle-gabrielov} providing the key inductive step.

\begin{Lem} \label{lem:pfaff-rolle}
  Let $p\in U$ and let $\d H$ be a generic constant one-form. Then
  \begin{multline} \label{eq:pfaff-rolle}
    \mult_p(f\_{n-k};\w\_k) \le \mult_p(\~f\_{n-k},g;\w\_{k-1})\\
    +\mult_p(\~f\_{n-k};\d H,\w\_{k-1})
  \end{multline}
  with
  \begin{equation} \label{eq:pfaff-g}
    g = \frac{ \bigwedge \d\~f\_{n-k}\bigwedge\w\_k\wedge (\d e+ce\d H) }
             {\bigwedge \d x\_n\wedge\d e}
  \end{equation}
  where $\~f\_{n-k}$ are as given by Lemma~\ref{lem:smooth-deform} and
  $c$ is a generic complex number.
\end{Lem}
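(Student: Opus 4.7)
The plan is to reduce the lemma to a single application of Lemma~\ref{lem:rolle-gabrielov}, with $\w_k$ playing the role of $\w$ there and an appropriate two-dimensional flat family playing the role of $X$. First I would invoke Lemma~\ref{lem:smooth-deform} to replace $f\_{n-k}$ by the smoothed family $\~f\_{n-k}$; by~\eqref{eq:f-vs-ft} this only weakens the inequality we are trying to prove, so it suffices to verify~\eqref{eq:pfaff-rolle} with $\~f\_{n-k}$ on both sides. The smoothing also guarantees that
\begin{equation*}
  X := \{\~f\_{n-k}=0\}\cap(S^{k-1}_p\times\C_e)
\end{equation*}
is a flat family of dimension two whose generic fibers are effectively non-singular curves in $S^{k-1}_p$ meeting $S^k_p$ discretely near $p$. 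In particular $X$ satisfies the hypotheses of Lemma~\ref{lem:rolle-gabrielov} applied in the ambient manifold $S^{k-1}_p$ with the restricted one-form $\w_k\rest{S^{k-1}_p}$, whose integral manifold through $p$ is exactly $S^k_p$.

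Applying Lemma~\ref{lem:rolle-gabrielov} in this setup yields
\begin{equation*}
  \mult_p(\w_k;X) \le \mult_p(g_G;X) + \mult_p(\d H;X),
\end{equation*}
where $g_G$ is the intrinsic Gabrielov quotient~\eqref{eq:gab-g} on $X$. I would then translate each term back into the paper's notation. The left-hand side is $\mult_p(\~f\_{n-k};\w\_k)$ by construction of $X$. The last term counts isolated points of $X^\e\cap\{H=H(p)\}$ near $p$; for generic $\d H$, the system $\d H,\w\_{k-1}$ is integrable at $p$ with integral manifold $S^{k-1}_p\cap\{H=H(p)\}$, so this term is exactly $\mult_p(\~f\_{n-k};\d H,\w\_{k-1})$. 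The remaining task is to identify $\mult_p(g_G;X)$ with $\mult_p(\~f\_{n-k},g;\w\_{k-1})$, where $g$ is the ambient expression~\eqref{eq:pfaff-g}.

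I expect this last identification to be the main technical point. It reduces to showing that, on $X$, the ambient $g$ and $g_G$ differ by a factor that is holomorphic and non-vanishing in a neighborhood of $p$ on $X^\e$ for small $\e$, so that the two meromorphic functions share a common divisor and hence a common deformation multiplicity. Writing $T:=\bigwedge\d\~f\_{n-k}\wedge\bigwedge\w\_{k-1}$ for the conormal volume form of $X$ in $M\times\C_e$, one first establishes a Jacobian identity
\begin{equation*}
  \bigwedge\d x\_n\wedge\d e = J^{-1}\cdot T\wedge\d H\wedge\d e,
\end{equation*}
with $J$ holomorphic and non-vanishing at $p$ on $X^\e$ for generic $\d H$ (using the non-singularity of $X^\e$ supplied by Lemma~\ref{lem:smooth-deform} together with $\bigwedge\w\_{k-1}\neq0$ at $p$), and then rewrites~\eqref{eq:pfaff-g} as
\begin{equation*}
  g = J\cdot\frac{T\wedge\w_k\wedge(\d e+ce\d H)}{T\wedge\d H\wedge\d e}.
\end{equation*}
A standard conormal-wedge calculation then shows that the ratio on the right restricts on $X$ exactly to $g_G$, giving $g=J\cdot g_G$ as meromorphic functions on $X$ near $p$. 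The generic constant $c$ is inherited directly from Lemma~\ref{lem:rolle-gabrielov} and plays the same role of handling the case where $X^0\cap S^k_p$ fails to be isolated.
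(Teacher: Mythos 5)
Your argument is correct and follows the same route as the paper's own proof: smooth via Lemma~\ref{lem:smooth-deform}, pass to the flat family $X$ cut out by $\~f\_{n-k}$ inside $S^{k-1}_p\times\C_e$, apply Lemma~\ref{lem:rolle-gabrielov} to $X$ with $\w_k$ and $\d H$, and then match the ambient $g$ of~\eqref{eq:pfaff-g} against the intrinsic Gabrielov quotient~\eqref{eq:gab-g} by observing that the conormal factor $\bigwedge\d\~f\_{n-k}\bigwedge\w\_{k-1}$ is non-vanishing on $X^\e$ for small $\e\neq0$. Your explicit Jacobian factor $J$ spells out the step the paper states more briefly (``the zeros of the numerator of~\eqref{eq:pfaff-g} agree with those of~\eqref{eq:gab-g}''); the content is the same.
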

\begin{proof}
  We may apply Lemma~\ref{lem:smooth-deform} and assume without loss
  of generality that $f\_{n-k}$ are already in the prescribed form.
  Since $\d H_1$ is generic, its integral manifold through $p$
  intersects $S_p^1,\ldots,S_p^{k-1}$ transversally, and it follows
  that the sequence $\d H_1,\w\_{k-1}$ is an integrable Pfaffian
  system as well, hence~\eqref{eq:pfaff-rolle} is well defined.
  
  Define the flat family $X\subset M\times\C_e$ by
  \begin{equation}
    X := \cF\big[ \{f\_{n-k}=0\}\cap S^{k-1}_p \big].
  \end{equation}
  We can now apply Lemma~\ref{lem:rolle-gabrielov} to $X$ with the
  forms $\w_k,\d H$. By Lemma~\ref{lem:smooth-deform} the form
  \begin{equation}
    \bigwedge \d f\_{n-k} \bigwedge \w\_{k-1}
  \end{equation}
  is non-vanishing on $X$ (for small $\e\neq0$), and it follows that
  the zeros of the numerator of~\eqref{eq:pfaff-g} agree with those
  of~\eqref{eq:gab-g} for such $\e$. As the denominator
  of~\eqref{eq:pfaff-g} has no zeros,~\eqref{eq:pfaff-rolle} now
  follows from Lemma~\ref{lem:rolle-gabrielov}.
\end{proof}

Applying Lemma~\ref{lem:pfaff-rolle} iteratively $k$ times gives the
following.

\begin{Lem}\label{lem:mc-inductive-step}
  Let $p\in U$ and and let $\d H_1,\ldots,\d H_k$ be generic constant
  one-forms. Then
  \begin{equation}\label{eq:mc-inductive-step}
    \mult_p(f\_{n-k};\w\_k) \le \mult_p\Gamma+ \sum_{j=1}^{k-1} \mult_p(\~f\_{n-k},g_j;\d H\_{j-1},\w\_{k-j})
  \end{equation}
  where $\Gamma$ is the cycle given by the flat limit of $\{\~f\_{n-k}=0\}$
  as $e\to0$,
  \begin{equation}
    g_j = \frac{ \bigwedge \d \~f\_{n-k} \bigwedge \w\_{k-j+1} \bigwedge \d H\_{j-1} \wedge(\d e+ce\d H_j) }
             {\d x_1\wedge\cdots\wedge\d x_n\wedge\d e}
  \end{equation}
  and $\~f\_{n-k}$ are as given by Lemma~\ref{lem:smooth-deform}.
\end{Lem}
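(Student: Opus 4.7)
\emph{Proof proposal.} The plan is a direct $k$-fold iteration of Lemma~\ref{lem:pfaff-rolle}. First, by Lemma~\ref{lem:smooth-deform}, we may replace $f\_{n-k}$ by $\~f\_{n-k}$ at no loss on the left-hand side, ensuring the effective non-singularity hypothesis of Lemma~\ref{lem:pfaff-rolle} is met at every subsequent step. We then fix once and for all a generic tuple of constant one-forms $\d H_1,\ldots,\d H_k$; each $\d H_j$ will serve as the auxiliary form $\d H$ in Lemma~\ref{lem:pfaff-rolle} at the $j$-th iteration.

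Setting $R_1 := \mult_p(\~f\_{n-k};\w\_k)$, I apply Lemma~\ref{lem:pfaff-rolle} recursively to the ``residual'' term
\begin{equation*}
  R_j := \mult_p(\~f\_{n-k};\d H\_{j-1},\w\_{k-j+1}),
\end{equation*}
treating $\d H\_{j-1},\w\_{k-j+1}$ as a $k$-form Pfaffian system from which $\w_{k-j+1}$ is the form to be peeled off (the order of the forms within the system is immaterial for the resulting chain of integral manifolds). This yields
\begin{equation*}
  R_j \le \mult_p(\~f\_{n-k},g_j;\d H\_{j-1},\w\_{k-j}) + R_{j+1},
\end{equation*}
where $g_j$ is the quantity produced by~\eqref{eq:pfaff-g} applied to the current system; a bookkeeping of which forms enter the numerator (now including the previously peeled-off $\d H_1,\ldots,\d H_{j-1}$) recovers the stated closed form for $g_j$. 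Telescoping these inequalities produces exactly the sum on the right-hand side of~\eqref{eq:mc-inductive-step}, modulo the final residual $R_{k+1}$.

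To close the argument I identify $R_{k+1} = \mult_p(\~f\_{n-k};\d H\_k)$ with $\mult_p\Gamma$. The common integral manifold of the constant one-forms $\d H_1,\ldots,\d H_k$ through $p$ is a linear subspace of codimension $k$, generic by choice of the $H_j$. Since $\~f_i = f_i - c_ie^N$ for generic $\vc$ and large $N$, the family $\{\~f\_{n-k}=0\}\subset M\times\C_e$ is flat by the construction in Lemma~\ref{lem:smooth-deform}, and its flat limit as $e \to 0$ is precisely the cycle $\Gamma$. By definition of the deformation multiplicity, $R_{k+1}$ counts the isolated points of $\{\~f^\e\_{n-k}=0\}$ intersected with this linear subspace that converge to $p$ as $\e \to 0$; for a generic linear subspace of complementary dimension, this count equals the local intersection number of $\Gamma$ with the subspace at $p$, which is $\mult_p\Gamma$ by the definition of algebraic multiplicity.

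The main technical point to verify is the simultaneous genericity of $(H_1,\ldots,H_k)$: at step $j$, Lemma~\ref{lem:pfaff-rolle} requires $\d H_j$ to be generic relative to all data already assembled, including the current chain of integral submanifolds and the perturbed flat family built from $\~f\_{n-k}$ and $\d H_1,\ldots,\d H_{j-1}$. Each such condition is Zariski-open in the space of tuples, so their intersection remains non-empty and a single sufficiently generic tuple validates all $k$ applications at once. Past this, the proof is a mechanical iteration with no further obstacle.
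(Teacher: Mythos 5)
Your proof follows the same route as the paper's: iterate Lemma~\ref{lem:pfaff-rolle} once per Pfaffian form, accruing a constant one-form $\d H_j$ into the residual system at each step, and identify the terminal residual $\mult_p(\~f\_{n-k};\d H\_k)$ with $\mult_p\Gamma$ by the genericity of the linear space cut out by $\d H\_k$. The paper likewise handles the successive genericity/re-deformation issue by a single remark that the parameters may be taken generic enough at the outset to serve all $k$ applications, which is exactly your Zariski-openness argument. So on the level of structure and ideas, this is a correct reproduction of the proof.

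There is, however, an indexing mismatch that you pass over silently. Starting from $R_1=\mult_p(\~f\_{n-k};\w\_k)$ and peeling until the residual becomes $R_{k+1}=\mult_p(\~f\_{n-k};\d H\_k)$ requires $k$ applications of Lemma~\ref{lem:pfaff-rolle}, hence telescoping produces $k$ summands $g_1,\ldots,g_k$, not the $k-1$ shown in~\eqref{eq:mc-inductive-step}. Your claim that telescoping ``produces exactly the sum on the right-hand side'' is therefore not literally true of the display as written. The discrepancy appears to originate in the paper: the printed sum $\sum_{j=1}^{k-1}$ is an off-by-one, as is the phrase ``in the $j=k-1$ step, the second summand is $\mult_p(\~f\_{n-k};\d H\_k)$'' in the paper's proof (that residual is reached at step $j=k$, not $j=k-1$). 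A clean sanity check: for $k=1$ the stated sum is empty, so~\eqref{eq:mc-inductive-step} would reduce to $\mult_p(f\_{n-1};\w_1)\le\mult_p\Gamma$ with $\Gamma$ just the flat limit of $\{\~f\_{n-1}=0\}$, discarding $\w_1$ entirely --- clearly insufficient; and the proof of Theorem~\ref{thm:mult-cycle-degs} itself speaks of ``$k$ multiplicity cycles'' arising from Definition~\ref{def:mult-cycle}, which presupposes $k$ summands. You should state explicitly that your iteration yields $\sum_{j=1}^{k}$ and flag the apparent typo rather than asserting agreement with a formula your own derivation contradicts.

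A minor but welcome point of care in your write-up: noting that the order of the forms within the Pfaffian system is immaterial for the chain of integral manifolds, which justifies treating $\d H\_{j-1},\w\_{k-j+1}$ as a system from which the last $\w$ is peeled while prepending the new $\d H_j$; the paper's notation implicitly relies on this but does not say so.
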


\begin{proof}
  Applying Lemma~\ref{lem:pfaff-rolle} with $\w\_k$
  and $\d H=\d H_1$ we obtain
  \begin{multline}
    \mult_p(f\_{n-k};\w\_k) \le \mult_p(\~f\_{n-k},g_1;\w\_{k-1})\\
    +\mult_p(\~f\_{n-k};\d H_1,\w\_{k-1})
  \end{multline}
  where the first summand corresponds to the $j=1$ summand
  in~\eqref{eq:mc-inductive-step}. Applying now
  Lemma~\ref{lem:pfaff-rolle} with $\d H_1,\w\_{k-1}$ and $\d H_2$
  we obtain
  \begin{multline}
    \mult_p(\~f\_{n-k};\d H_1,\w\_{k-1}) \le \mult_p(\~f\_{n-k},g_2;\d H_1,\w\_{k-2})\\
    +\mult_p(\~f\_{n-k};\d H_1,\d H_2,\w\_{k-2}).
  \end{multline}
  where the first summand corresponds to the $j=2$ summand
  in~\eqref{eq:mc-inductive-step}. Note that formally one should apply
  a further smoothing deformation to $\~f\_{n-k}$ when applying
  Lemma~\ref{lem:pfaff-rolle}. This would not make any difference for
  the rest of the argument, but to simplify the notation we assume
  from the start that the deformation $\~f\_{n-k}$ was chosen
  sufficiently generic to apply Lemma~\ref{lem:pfaff-rolle} with
  $\d H\_j,\w_{k-j}$ for $j=1,\ldots,k-1$.
  
  Continuing in this manner one obtains the summands corresponding to
  $j=1,\ldots,k-1$ in~\eqref{eq:mc-inductive-step}. In the $j=k-1$
  step, the second summand is given by $\mult_p(\~f\_{n-k},\d H\_k)$.
  Since the Pfaffian system $\d H\_k$ defines a generic affine-linear
  space passing through $p$ and properly intersecting the $k$-cycle
  $\Gamma$ there, this term is equal to $\mult_p\Gamma$.
\end{proof}

We are now ready to present the construction of the multiplicity
cycles. The construction depends on the choice of various parameters,
the totality of which we denote by $\vH$.

\begin{Def}\label{def:mult-cycle}
  The \emph{multiplicity cycle} $\Gamma^\vH(f\_{n-k};\w\_k)$ is a
  mixed cycle in $M$ defined recursively as follows. If $k=0$,
  $\Gamma^\vH(f\_{n-k};\w\_k)$ is the cycle given by the flat limit of
  the set $\{\~f\_n=0\}$ as $e\to0$. Otherwise,
  \begin{equation}
    \Gamma^\vH(f\_{n-k};\w\_k) := \Gamma+\sum_{j=1}^{k-1} \Gamma^\vH(\~f\_{n-k},g_j;\d H\_{j-1},\w\_{k-j})
  \end{equation}
  where $\Gamma$, $g\_{k-1}$ and $\tilde f\_{n-k}$ are as given in
  Lemma~\ref{lem:mc-inductive-step}. In each recursive step we use
  different generic one-forms $\d H_j$ and parameters defining the
  smoothing deformations of $f\_{n-k}$ in
  Lemma~\ref{lem:smooth-deform}, all of which are encoded by $\vH$.
\end{Def}

\begin{proof}[Proof of Theorem~\ref{thm:mult-cycle}]
  Let $\mu:M\to\Z_{\ge0}$ be defined by
  \begin{equation}
    \mu(p) := \mult_p(f\_{n-k};\w\_k)
  \end{equation}
  We denote $\Gamma^\vH:=\Gamma^\vH(f\_{n-k};\w\_k)$. If
  $p_1,\ldots,p_s\in U$ is any finite set of points, then for a
  sufficiently generic choice of $\vH$ (and the parameters $N$ for the
  smoothing deformations sufficiently large) we have
  \begin{equation} \label{eq:mu-vs-mc}
    \mu(p_i)\le\mult_{p_i}\Gamma^\vH \qquad i=1,\ldots,k.
  \end{equation}
  Indeed, for such $\vH$ Lemma~\ref{lem:mc-inductive-step} applies
  with $p=p_i$, and~\eqref{eq:mu-vs-mc} follows by reverse induction
  on $k$. Moreover, one can certainly choose $\vH$ generic enough (and
  with large enough $N$) so that this applies to each of the finitely
  many points under consideration.

  To obtain an estimate uniform in $p$ we appeal to the results
  of~\secref{sec:sc-bounds}. From the construction of $\Gamma^\vH$ it
  is clear that it is an algebraic cycle whose total degree is
  uniformly bounded in terms of the degrees of $f\_{n-k}$ and $\w\_k$
  (for a more precise statement see
  Theorem~\ref{thm:mult-cycle-degs}). Thus, by
  Proposition~\ref{prop:mc-complexity} the function
  $p\to\mult_p\Gamma^\vH$ is an upper semicontinuous function of
  complexity bounded by some uniform constant $D$ independent of
  $\vH$. Thus, by Proposition~\ref{prop:sc-compact} there exists some
  finite set of points $P\subset M$ such that for any $\vH$,
  \begin{equation}
    \mu(p)\rest P \le \mult_p\Gamma^\vH\rest P \implies \mu(p) \le \mult_p\Gamma^\vH \text{ for any $p\in M$}.
  \end{equation}
  Choosing now $\vH$ sufficiently generic so that~\eqref{eq:mu-vs-mc}
  holds for every point of $P$ and setting
  $\Gamma(f\_{n-k},\w\_k):=\Gamma^\vH$ concludes the proof.
\end{proof}

\begin{proof}[Proof of Theorem~\ref{thm:mult-cycle-degs}]
  Suppose that $(f\_{n-k};\w\_k)$ have degrees
  $(\beta\_{n-k};\alpha\_k)$. If $j=n-k$ then $\Gamma^\vH(f\_{n-k},\w\_k)$ is
  equal to the flat limit of a family of cycles defined by
  equations of degrees $\beta\_{n-k}$ and the claim follows.

  Otherwise, Definition~\ref{def:mult-cycle} implies that the
  $j$-dimensional piece of $\Gamma^\vH(f\_{n-k},\w\_k)$ is a sum of
  the corresponding $j$-dimensional pieces of $k$ multiplicity cycles,
  each having degrees bounded by $(\beta\_{n-k},S;\alpha\_{k-1})$.
  Continuing inductively, each such cycle gives rise to $k-1$
  multiplicity cycles, each having degrees bounded by
  $(\beta\_{n-k},S,2S;\alpha\_{k-2})$ and so on. The result follows by
  induction.
\end{proof}

Theorem~\ref{thm:mult-cycle} deals particularly with complete
intersections, i.e. with the situation where the number of equations
$f\_{n-k}$ is complementary to the number of one-forms $\w\_k$. However,
this can easily be extended to more general intersections.

\begin{Cor} \label{cor:mc-general} Let $V\subset M\times\C_e$ be a
  family given by the common zero locus of families of polynomials
  $f\_m$ (not necessarily a complete intersection). Then there exist a
  mixed algebraic cycle $\Gamma:=\Gamma(f\_m;\w\_k)$ of dimension at
  most $k$ in $M$ such that for every $p\in U$,
  \begin{equation}
    \mult_p(\w\_k;V) \le \mult_p\Gamma(f\_m;\w\_k).
  \end{equation}
  Moreover, the cycle is obtained algebraically from $f\_m;\w\_k$ and
  satisfies the same degree bounds as in
  Theorem~\ref{thm:mult-cycle-degs}.
\end{Cor}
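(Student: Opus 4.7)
The plan is to reduce Corollary~\ref{cor:mc-general} to the complete intersection case already handled by Theorem~\ref{thm:mult-cycle}. Finiteness of $\mult_p(\w\_k;V)$ forces $\dim V\le k+1$, which in turn requires $m\ge n-k$. Introducing a constant matrix $A=(a_{ij})\in\Mat_{(n-k)\times m}$ and the linear combinations
\[
\tilde f_i := \sum_{j=1}^m a_{ij}f_j,\qquad i=1,\ldots,n-k,
\]
I form the complete intersection $\tilde V:=\{\tilde f\_{n-k}=0\}\supset V$, of dimension $k+1$. Theorem~\ref{thm:mult-cycle} applied to $\tilde V$ produces a multiplicity cycle $\Gamma(\tilde f\_{n-k};\w\_k)$ majorizing $\mult_p(\w\_k;\tilde V)$ at every $p\in U$, and the degree estimate of Theorem~\ref{thm:mult-cycle-degs} transfers directly since $\deg\tilde f_i\le\max_j\deg f_j$.

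The crux is the pointwise inequality $\mult_p(\w\_k;V)\le\mult_p(\w\_k;\tilde V)$ for suitably chosen $A$. For fixed $p\in V$ and generic $A$, a point $q$ near $p$ lies in $\tilde V\setminus V$ only if the vector $(f_1(q),\ldots,f_m(q))\in\C^m$ is a nonzero element of $\ker A$, a generic subspace of codimension $n-k$; as this image vector tends to zero as $q\to p$, genericity of $A$ keeps the excess locus $\tilde V\setminus V$ away from $p$, so $V$ and $\tilde V$ agree in a neighborhood of $p$. Every isolated point of $V^\e\cap S^k_p$ converging to $p$ is therefore also isolated in $\tilde V^\e\cap S^k_p$, and the natural surjection of local rings on these intersections gives the desired inequality of intersection multiplicities.

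To make the choice of $A$ uniform in $p$, I would apply the compactness argument of~\secref{sec:sc-bounds} exactly as in the proof of Theorem~\ref{thm:mult-cycle}: both $p\mapsto\mult_p(\w\_k;V)$ and $p\mapsto\mult_p\Gamma(\tilde f\_{n-k};\w\_k)$ are upper semicontinuous with complexity bounded uniformly in $A$ (via Proposition~\ref{prop:mc-complexity}), so Proposition~\ref{prop:sc-compact} reduces the verification to a fixed finite set $P\subset U$, for which a single sufficiently generic $A$ works. Setting $\Gamma(f\_m;\w\_k):=\Gamma(\tilde f\_{n-k};\w\_k)$ completes the proof.

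The main obstacle is the pointwise reduction: at points $p$ where $V$ has codimension strictly greater than $n-k$ the map $q\mapsto(f_1(q),\ldots,f_m(q))$ can be degenerate and the excess locus $\tilde V\setminus V$ need not avoid $p$. However at such $p$ one has $\dim V^\e<k$ near $p$, so $V^\e\cap S^k_p$ is empty for small $\e\ne 0$ and $\mult_p(\w\_k;V)=0$, making the inequality trivial there. Thus only the codimension-$(n-k)$ stratum of $V$ requires the genericity argument, and on that stratum the Bertini-type separation of $\ker A$ from the punctured image of a neighborhood of $p$ is standard.
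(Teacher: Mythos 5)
Your high-level strategy matches the paper's: replace $f\_m$ by $n-k$ generic linear combinations $\tilde f_i=\sum_j a_{ij}f_j$, apply Theorem~\ref{thm:mult-cycle} to the resulting complete intersection, and then promote the pointwise estimate to a uniform one via Propositions~\ref{prop:mc-complexity} and~\ref{prop:sc-compact}. However, the central step of your pointwise reduction is false as stated, and this is precisely where the paper takes a more careful route.

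You claim that for generic $A$ the varieties $V$ and $\tilde V=\{\tilde f\_{n-k}=0\}$ agree as germs at $p$, on the grounds that a generic subspace $\ker A$ of codimension $n-k$ in $\C^m$ meets the punctured image of a neighborhood of $p$ under $q\mapsto(f_1(q),\ldots,f_m(q))$ only at the origin. This would require that image germ to have dimension at most $n-k$, which is false in general: already for $m=2$, $n=2$, $k=1$, $f_1=xy$, $f_2=y^2$ one has $V=\{y=0\}\times\C_e$ of the expected codimension $n-k=1$, but $(xy,y^2)$ is dominant, so every $\tilde f=a_1xy+a_2y^2=y(a_1x+a_2y)$ acquires the excess line $\{a_1x+a_2y=0\}$ through the origin. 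In general the residual (linked) components of the complete intersection $\tilde V$ do pass through $p$, and the claim "$V$ and $\tilde V$ agree near $p$" cannot be salvaged by genericity of $A$. The paper sidesteps this by asserting only what is needed: the finitely many curves $\gamma^p_1,\ldots,\gamma^p_{r_p}$ of $V_p\cap S^k_p$ that project dominantly to $\C_e$ and actually contribute to $\mult_p(\w\_k;V)$ remain, for generic coefficients $\vL$, irreducible components of the one-dimensional germ $\{g^\vL\_{n-k}=0\}\cap S^k_p$ (here containment $V\subset\{g^\vL\_{n-k}=0\}$ does all the work), and their deformation multiplicities can only increase under that enlargement. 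That "component-level" monotonicity is the correct replacement for your "set-level" equality; with it in place of the false claim, your argument becomes the paper's proof.
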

\begin{proof}
  Let $\mu:M\to\Z_{\ge0}$ be defined by
  \begin{equation}
    \mu(p) := \mult_p(\w\_k;V).
  \end{equation}
  Let $p\in M$ be any point, and denote by $V_p$ the germ of $V$ at
  $p$. Then $V_p\cap S^k_p$ is a union of curves
  $\gamma^p_1,\ldots,\gamma^p_{r_p}$ projecting dominantly on $\C_e$
  (and possibly some other components), and only these curves
  contribute to the multiplicity defining $\mu(p)$. If we let
  $g^\vL\_{n-k}$ denote $n-k$ generic linear combinations of $f\_m$
  (with $\vL$ denoting the collection of coefficients defining these
  combinations), then by standard arguments
  $\gamma^p_1,\ldots,\gamma^p_{r_p}$ are also irreducible components
  of the intersection $\{g^\vL\_{n-k}=0\}\cap S^k_p$, and therefore
  \begin{equation}
    \mu(p) \le \mu^\vL(p) := \mult_p(\{g^\vL\_{n-k}=0\};\w\_k).
  \end{equation}

  We pick generic $\vH$ and denote
  \begin{equation}
    \Gamma^{\vH,\vL}:=\Gamma^\vH(g^\vL\_{n-k};\w\_k).
  \end{equation}
  Then by Theorem~\ref{thm:mult-cycle} we have
  \begin{equation}
    \mu(p) \le \mu^\vL(p) \le \mult_p \Gamma^{\vH,\vL}.
  \end{equation}
  Moreover, for sufficiently generic choices of $\vH,\vL$ the same
  holds for any finite collection of points $P\subset M$. The proof
  can now be concluded in the same way as in the proof of
  Theorem~\ref{thm:mult-cycle}.
\end{proof}

\section{Applications}
\label{sec:applications}

\subsection{Critical points and Euler characteristics of Milnor fibers}
\label{sec:mf-estimates}

Let $\w\_k\in\Lambda^1(M)$. We suppose for simplicity that they define
an integrable Pfaffian system at every point $p\in M$ with the
corresponding chain of integral submanifolds
$S^k_p\subset\cdots\subset S^1_p\subset S^0_p=M$ (although one could easily relax this
requirement).

For a family of polynomials $f\_m$ on $M$ we define
\begin{equation} \label{eq:sigma-def}
  \S(f\_m;\w\_k) := \{ f\_m=\bigwedge \d f\_m\bigwedge\w\_k\wedge\d e = 0\}.
\end{equation}
In other words, for each fiber $e=\e$ we define $\S(f\_m;\w\_k)$ to be
the set of points $p$ where $\{f^\e\_m=0\}\cap S^k_p$ is not
effectively smooth. Note that~\eqref{eq:sigma-def} is not in general a
complete intersection: it involves the vanishing of an $m+k+1$ form in
$n+1$ variables, and is thus a determinantal variety. However,
its deformation multiplicity at any point $p\in M$ may still be
estimated by Corollary~\ref{cor:mc-general}. We will apply this
result to estimate the topological complexity of Milnor fibers
of deformations.

Consider the flat family defined by
\begin{equation}
  X \subset M\times\C_e ,\qquad X := \cF\big[ \{f\_m=0\} \big].
\end{equation}
Recall that the Milnor fiber of $X$ at a point $p$ is defined
to be
\begin{equation}
  F_p := X^\e\cap D_\delta(p), \qquad 0\ll\e\ll\delta\ll1
\end{equation}
where $D_\delta(p)$ denotes the disc of radius $\delta$ around $p$ in
any analytic coordinate system. The homotopy type of the set $F_p$ is
independent of the choice of coordinates and $\e,\delta$.

We call a point $p\in M$ \emph{good} if the Milnor fiber at $p$ is
effectively smooth. In this case a local complex analog of real Morse
theory allows one to study the topological structure of the Milnor
fiber in terms of critical points of linear functionals\footnote{in
  fact, a more general stratified version holds also without the
  assumption on $p$, see \cite{le:polar-curves}}. This result is known
as Le's attaching formula (see for instance \cite{le:polar-curves};
cf. \cite[Proposition~2]{gabrielov:mult} and
\cite[Proposition~12]{mult-morse}). In particular, this implies the
following.

\begin{Prop} \label{prop:mf-bounds}
  Let $\vell:=\ell\_{n-m+1}$ be generic linear functionals on
  $M=\C^n$, and $p\in M$ a good point. Then for $k=0,\ldots,n-m$,
  \begin{equation}
    b_{n-m-k}(F_p) \le \mult_p (\d\ell\_k; \S(f\_m;\d\ell\_{k+1}) )
  \end{equation}
\end{Prop}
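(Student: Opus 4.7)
The plan is to combine an iterated application of Le's attaching formula, cited just before the proposition, with a direct identification of the resulting polar critical-point count as the deformation multiplicity on the right-hand side.

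Concretely, I would set $F^{(j)}_p := F_p \cap \{\ell_1 = \ell_1(p)\} \cap \cdots \cap \{\ell_j = \ell_j(p)\}$ for $0 \le j \le n-m$, so that $F^{(0)}_p = F_p$ and $\dim_\C F^{(j)}_p = n-m-j$. Because $p$ is a good point and the $\ell_i$ are generic, each $F^{(j)}_p$ is effectively smooth, so the classical (non-stratified) form of Le's attaching formula applies: $F^{(j-1)}_p$ is homotopy equivalent to $F^{(j)}_p$ with real $(n-m-j+1)$-cells attached, one for each critical point of $\ell_j|_{F^{(j-1)}_p}$ in a small neighborhood of $p$. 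Attaching a $d$-cell affects only $H_{d-1}$ and $H_d$ and can only enlarge $H_d$, so iterating over $j$ shows that the unique step that can increase $b_{n-m-k}$ is the step $j = k+1$, at which $(n-m-k)$-cells are attached to $F^{(k+1)}_p$ (a space of complex, and hence homotopy, dimension $n-m-k-1$). This gives
\begin{equation*}
b_{n-m-k}(F_p) \le \#\bigl\{\text{critical points of } \ell_{k+1}|_{F^{(k)}_p}\text{ converging to } p \text{ as } \e\to 0\bigr\}.
\end{equation*}

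To match this count with the stated multiplicity, I would observe that at a smooth point $q$ of $X^\e$ the condition that $q \in F^{(k)}_p$ be a critical point of $\ell_{k+1}|_{F^{(k)}_p}$ is the linear dependence of $\d f_1, \ldots, \d f_m, \d\ell_1, \ldots, \d\ell_{k+1}$ at $q$; wedging with $\d e$ to encode the fiber direction, this becomes $\bigwedge \d f\_m \wedge \bigwedge \d\ell\_{k+1} \wedge \d e = 0$, which together with $f\_m=0$ is precisely the defining system of $\S(f\_m;\d\ell\_{k+1})$ in~\eqref{eq:sigma-def}. The constraint that $q$ lie in the slice $\{\ell_1 = \ell_1(p),\ldots,\ell_k = \ell_k(p)\}$ is exactly $q \in S^k_p$ for the Pfaffian system $\d\ell\_k$. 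Counting, for small $\e\neq 0$, isolated points of $\S(f\_m;\d\ell\_{k+1})^\e \cap S^k_p$ converging to $p$ is then by definition $\mult_p(\d\ell\_k;\S(f\_m;\d\ell\_{k+1}))$, completing the bound.

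The main obstacle is the Morse-theoretic bookkeeping in the iteration: one must verify that for generic $\ell_i$ the cells produced at the $j$-th step contribute only to $H_{n-m-j+1}$ and $H_{n-m-j}$ of $F^{(j-1)}_p$, so that the contribution to $b_{n-m-k}(F_p)$ really arises from a single step $j=k+1$ of the induction. This relies on the Lefschetz-hyperplane flavor of Le's formula in the effectively smooth case and on the good-point hypothesis; without it one would need the stratified version of Le's formula and the polar multiplicities would become a sum over strata, which would break the clean identification with the single determinantal variety $\S(f\_m;\d\ell\_{k+1})$.
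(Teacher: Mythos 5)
Your argument is correct and follows the paper's own proof essentially verbatim: both iterate Le's attaching formula through the chain of generic hyperplane slices $X_j := X \cap \{\ell_1 = \ell_1(p),\ldots,\ell_j=\ell_j(p)\}$, identify the number of attached $(n-m-j+1)$-cells with the deformation multiplicity of the polar variety $\S(f\_m;\d\ell\_{j})$ on the slice $S^{j-1}_p$, and invoke $b_r \le c_r$. Your extra elaboration on why only the $j=k+1$ step can contribute to $b_{n-m-k}$ (cells of dimension $d$ only affect $H_{d-1}$ and $H_d$) is exactly the reasoning behind the paper's terse ``noting that $b_r \le c_r$.''
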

\begin{proof}[Sketch of proof.]
  Denote $X_0:=X$. According to Le's attaching formula, the Milnor
  fiber of $X$ is obtained from the Milnor fiber of
  $X_1:=X\cap\{\ell_1=\ell_1(p)\}$ by attaching $c_{n-m}$ cells, where
  $c_{n-m}$ is given by the number of critical points of $\ell_1$ on
  $X_0^\e$ converging to $p$ as $\e\to0$. Thus
  \begin{equation}
    c_{n-m}=\mult_p \S(f\_m;\d\ell_1).
  \end{equation}
  Similarly, the Milnor fiber of $X_1$ is obtained from the Milnor
  fiber of $X_2:=X_1\cap\{\ell_2=\ell_2(p)\}$ by attaching $c_{n-m-1}$
  cells, where $c_{n-m-1}$ is given by the number of critical points
  of $\ell_2$ on $X_1^\e$ converging to $p$ as $\e\to0$. Thus
  \begin{equation}
    c_{n-m-1} = \mult_p(\d\ell_1; \S(f\_m;\d\ell\_2)).
  \end{equation}
  Continuing inductively and noting that $b_r\le c_r$ concludes the
  proof.
\end{proof}

In combination with Corollary~\ref{cor:mc-general} we have the
following multiplicity cycle estimate for the Betti numbers of
the Milnor fiber of a deformation over a general point.

\begin{Thm} \label{thm:mf-mc} For $r=0,\ldots,n-m$ there exists a
  mixed algebraic cycle $\Gamma_r:=\Gamma_r(X)$ such that for any good
  point $p\in M$,
  \begin{equation}
    b_r(F_p) \le \mult_p \Gamma_r(X).
  \end{equation}
  Moreover, if $\deg f\_m\le (d,\ldots,d)$ then
  \begin{equation}
    \deg \Gamma^{n-j}_r(X) \le D_{n,n-m-r,j} d^{n-j}, \qquad D_{n,k,j}:=m^{n-j} C_{n,k,j}
  \end{equation}
  for $C_{n,k,j}$ given in Corollary~\ref{cor:mc-degs-simple}.
\end{Thm}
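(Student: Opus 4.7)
My plan is to concatenate Proposition~\ref{prop:mf-bounds} with Corollary~\ref{cor:mc-general}. Fix $r\in\{0,\ldots,n-m\}$ and set $k:=n-m-r$. Choose generic constant one-forms $\d\ell_1,\ldots,\d\ell_{k+1}$ on $M$; being constant, they trivially define an integrable Pfaffian system at every point, whose leaves are parallel affine subspaces. I define $\Gamma_r(X)$ to be the multiplicity cycle supplied by Corollary~\ref{cor:mc-general} applied to the family $\S(f\_m;\d\ell\_{k+1})\subset M\times\C_e$ together with the Pfaffian system $\d\ell\_k$. Then Proposition~\ref{prop:mf-bounds} gives $b_r(F_p)\le\mult_p(\d\ell\_k;\S(f\_m;\d\ell\_{k+1}))$ at every good point $p$, and Corollary~\ref{cor:mc-general} majorizes the right-hand side by $\mult_p\Gamma_r(X)$, yielding the main inequality.

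All that remains is the degree estimate. The variety $\S(f\_m;\d\ell\_{k+1})$ is the common vanishing of $f_1,\ldots,f_m$ together with the coefficients of the $(m+k+2)$-form $\bigwedge\d f\_m\wedge\bigwedge\d\ell\_{k+1}\wedge\d e$ expanded in the standard basis of $\Lambda(M\times\C_e)$. Each such coefficient is an $(m+k+2)\times(m+k+2)$ minor whose first $m$ rows are gradients of the $f_i$ (entries of degree $\le d-1$) and whose remaining $k+2$ rows are constant, hence has total degree at most $m(d-1)\le md$. Consequently the $n-k=m+r$ generic linear combinations of these equations used in the proof of Corollary~\ref{cor:mc-general} all have degree at most $md$, while the Pfaffian one-forms $\d\ell_j$ contribute $\alpha_j=0$ to the sum $S$ of Theorem~\ref{thm:mult-cycle-degs}.

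Plugging $\beta_1,\ldots,\beta_{m+r}\le md$ and $S\le(m+r)md\le nmd$ into Theorem~\ref{thm:mult-cycle-degs} with parameters $k=n-m-r$ and target codimension $n-j$ yields
\begin{equation}
    \deg\Gamma^{n-j}_r(X)\le\tfrac{k!}{j!}\,2^{(k-j)(k-j-1)/2}(md)^{m+r}(nmd)^{k-j}=m^{n-j}C_{n,k,j}d^{n-j}=D_{n,k,j}d^{n-j},
\end{equation}
as claimed. The only subtle point --- and the main obstacle --- is verifying that the inflation introduced by the determinantal presentation aggregates cleanly: the identity $m+r+(k-j)=n-j$ forces the $m^{m+r}$ coming from $\beta_1\cdots\beta_{n-k}$ and the $m^{k-j}$ coming from $S^{k-j}$ to combine into exactly the factor $m^{n-j}$ built into $D_{n,k,j}$. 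Once this bookkeeping is checked, Theorem~\ref{thm:mf-mc} follows by a direct assembly of the two cited results.
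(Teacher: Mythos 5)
Your proof follows the same architecture as the paper's: apply Proposition~\ref{prop:mf-bounds} to get $b_r(F_p)\le\mult_p(\d\ell\_k;\S(f\_m;\d\ell\_{k+1}))$, then Corollary~\ref{cor:mc-general} to get the multiplicity cycle, and plug the degree bound $\le md$ for the determinantal equations of $\S(f\_m;\d\ell\_{k+1})$ into Theorem~\ref{thm:mult-cycle-degs}. Your bookkeeping is right, and the identity $m+r+(k-j)=n-j$ does make the factors $m^{m+r}\cdot m^{k-j}$ and $n^{k-j}$ recombine into exactly $D_{n,k,j}=m^{n-j}C_{n,k,j}$ --- this is the same calculation the paper does implicitly.

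There is, however, a genuine gap in the passage from a finite set of points to every good point. You fix a single generic choice of $\vell$, define $\Gamma_r(X)$, and then assert that Proposition~\ref{prop:mf-bounds} gives the inequality \emph{at every good point $p$}. But the genericity required of $\vell$ in Proposition~\ref{prop:mf-bounds} depends on the point $p$ (the linear functionals must be suitably transverse to the Milnor fiber at $p$, via Le's attaching formula), and the same is true of the generic parameters $\vH,\vL$ inside Corollary~\ref{cor:mc-general}. A single choice cannot a priori work for \emph{all} good points simultaneously. The paper's own proof is explicit about this: it first establishes the inequality for an arbitrary finite set of points and then invokes the compactness mechanism of~\secref{sec:sc-bounds} --- Propositions~\ref{prop:sc-compact} and~\ref{prop:mc-complexity} together with the upper semicontinuity of $p\mapsto\mult_p\Gamma$ --- to conclude that a single generic choice tuned to a finite "test set" $P$ already forces the bound everywhere. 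You need to append this step; without it the definition of a single, $p$-independent cycle $\Gamma_r(X)$ is not justified.
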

\begin{proof}
  For any finite collection of points $P\subset M$ the claim is proved
  by choosing sufficiently generic $\vell$ and applying
  Proposition~\ref{prop:mf-bounds} and Corollary~\ref{cor:mc-general},
  noting that the equations defining $\S(f\_m;\d\ell\_{k+1})$ have
  degrees bounded by $md$ (for any $k$). To obtain a bound uniform
  over all good point $p\in M$ one can argue in the same way as in the
  proof of Theorem~\ref{thm:mult-cycle}.
\end{proof}

\subsection{Multiplicity estimates for non-singular vector fields}
\label{sec:vf-estimates}

Let $\xi$ be a polynomial vector field,
\begin{equation}
  \xi = \xi_1 \pd{}{x_1}+\cdots+\xi_n\pd{}{x_n}, \qquad \xi\_n\in\C[x\_n]
\end{equation}
and $P$ be a polynomial. Let $p\in M$ be a non-singular point of
$\xi$. The \emph{multiplicity of $P$ along $\xi$ at $p$} is defined to
be
\begin{equation}
  \mult_p^\xi(P) := \ord_p P\rest{\gamma_p}
\end{equation}
where $\gamma_p$ denotes the trajectory of $\xi$ through $p$. If $\xi$
is singular at $p$, or if $P$ vanishes identically on $\gamma_p$ we
define $\mult_p^\xi(P)=\infty$.

The problem of estimating $\mult_p^\xi(P)$ in terms of $d:=\deg P$ and
$\delta:=\deg\xi$ has been considered by various authors motivated by
applications in transcendental number theory
\cite{bm:mult-I,bm:mult-II,nesterenko:mult-nonlinear,nesterenko:modular},
control theory
\cite{risler:nonholonomy,gabrielov:mult-old,gabrielov:mult} and
dynamical systems \cite{ny:chains,yomdin:oscillation}. We present
below the strongest estimate known for $\mult_p^\xi(P)$ as a function
of $p$, for arbitrary non-singular vector fields. This result (in a
slightly different formulation) was first obtained using algebraic
methods in \cite{nesterenko:mult-nonlinear} and later using
topological methods in \cite{mult-morse}. We give the formulation of
\cite{mult-morse} as it is more suitable for comparison with the
present paper.

For a (possibly mixed) cycle $\Gamma$ we define a function
$\degf_p(\Gamma):M\to\Z_{\ge0}$ as follows. For an irreducible variety
$V\subset M$ we define
\begin{equation}
  \degf_p(V) :=
  \begin{cases}
    \deg V & p\in V \\
    0 & \text{otherwise},
  \end{cases}
\end{equation}
and extend this to arbitrary cycles $\Gamma$ by linearity. Then the
following holds \cite{mult-morse}.

\begin{Thm}
  There exists a mixed algebraic cycle $\Gamma\subset M$ such that
  \begin{equation}
    \mult_p^\xi(P) \le \degf_p(\Gamma)
  \end{equation}
  whenever the left-hand side is finite, and moreover,
  \begin{equation}
    \deg \Gamma^j \le \tilde C_{n,\delta} d^j
  \end{equation}
  where $\tilde C_{n,\delta}$ is a constant depending
  singly-exponentially on $n$ and $\delta$.
\end{Thm}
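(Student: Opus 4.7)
The plan is to realize the trajectory of $\xi$ through a non-singular point $p$ as the bottom leaf of an integrable polynomial Pfaffian system of length $n-1$, apply Corollary~\ref{cor:mc-general} to the single polynomial $P$ against this system, and then pass from the resulting pointwise multiplicity bound to the claimed bound in terms of $\degf_p$. The first step is therefore to construct polynomial one-forms $\w_1,\ldots,\w_{n-1}$ on $M=\C^n$ with $\deg\w_i\le A(n,\delta)$ for some $A$ singly-exponential in $n$ and $\delta$, such that $\w_i(\xi)\equiv0$ for every $i$ and, at every non-singular point $p$ of $\xi$, the sequence is integrable in the sense of~\secref{sec:mc} with bottom leaf $S^{n-1}_p=\gamma_p$. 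The idea is to build an ascending chain $\langle\xi\rangle=\mathcal{D}_1\subset\cdots\subset\mathcal{D}_{n-1}\subset TM$ of Frobenius-integrable polynomial distributions and let $\w_i$ cut out $\mathcal{D}_i$ inside $\mathcal{D}_{i+1}$; each Frobenius-closure step compounds the degrees, which is precisely what forces the final constant to be singly-exponential.

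With the Pfaffian chain in hand, consider the trivial flat family $V:=\{P=0\}\times\C_e\subset M\times\C_e$. Since $V^\e=\{P=0\}$ is independent of $\e$, the deformation multiplicity collapses to the ordinary intersection multiplicity,
\begin{equation*}
  \mult_p(\w\_{n-1};V) \;=\; \ord_p P\rest{\gamma_p} \;=\; \mult_p^\xi(P),
\end{equation*}
whenever $\mult_p^\xi(P)<\infty$. Applying Corollary~\ref{cor:mc-general} with $m=1$, $f_1=P$ and the chain from the previous paragraph then yields a mixed algebraic cycle $\Gamma$ of top dimension at most $n-1$ with $\mult_p^\xi(P)\le\mult_p\Gamma$ at every such $p$. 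To convert this into a bound in terms of $\degf_p$, I would use the basic inequality $\mult_p W\le\deg W$, valid for any irreducible $W$ passing through $p$ (by intersecting with a generic linear subspace of complementary dimension and applying Bezout); extending by linearity over the components of $\Gamma$ gives $\mult_p\Gamma\le\degf_p\Gamma$, which combines with the previous inequality to give the desired $\mult_p^\xi(P)\le\degf_p\Gamma$.

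For the degree estimate, substitute $k=n-1$, $\beta_1=d$ and $\alpha_i\le A(n,\delta)$ into Theorem~\ref{thm:mult-cycle-degs} to obtain, for each $j\ge1$,
\begin{equation*}
  \deg\Gamma^j \;\le\; \frac{(n-1)!}{(n-j)!}\,2^{(j-1)(j-2)/2}\,d\cdot S^{j-1},\qquad S\le d+(n-1)A(n,\delta);
\end{equation*}
expanding $S^{j-1}$ and absorbing every factor independent of $d$ into a single constant produces $\deg\Gamma^j\le\tilde C_{n,\delta}\,d^j$ with $\tilde C_{n,\delta}$ singly-exponential in $n$ and $\delta$. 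The hard part of the argument is the Pfaffian construction in the first paragraph: polynomial annihilators of $\xi$ are easy to exhibit, but arranging that \emph{every} prefix $\w_1,\ldots,\w_i$ is Frobenius integrable --- so that the nested chain of germs $S^1_p\supset\cdots\supset S^{n-1}_p=\gamma_p$ genuinely exists at every non-singular $p$ --- is what drives the intermediate Frobenius-closure computations, and hence the singly-exponential degree growth that ultimately determines $\tilde C_{n,\delta}$.
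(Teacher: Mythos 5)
Your plan hinges on realizing the trajectories of an arbitrary non-singular polynomial vector field $\xi$ as the bottom leaves of a \emph{polynomial} integrable Pfaffian system $\w_1,\ldots,\w_{n-1}$ of bounded degree, via a flag of Frobenius-integrable polynomial distributions $\langle\xi\rangle=\mathcal{D}_1\subset\cdots\subset\mathcal{D}_{n-1}\subset TM$. This is the genuine gap: such a flag does not exist in general, and the obstruction is not merely a matter of degree bookkeeping. Already the first step would require a polynomial one-form $\w$ with $\w(\xi)\equiv 0$ and $\d\w\wedge\w\equiv 0$ near every non-singular point of $\xi$, i.e.\ a codimension-one polynomial foliation tangent to $\xi$; a generic polynomial vector field (outside the commutative-Lie-algebra setting of \secref{sec:group-estimates}, where the paper does use exactly this construction because the integrability of the invariant $\w_i\in\fg^*$ comes for free) admits no such foliation, and ``taking the Frobenius closure'' of $\langle\xi,\eta\rangle$ will typically saturate to rank $n$ rather than stabilize at rank~$2$. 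This is precisely the distinction between the Pfaffian case (Theorem~\ref{thm:mult-cycle}) and the general foliation case, which the paper isolates as an open problem in Conjecture~\ref{con:foliation-mc} and Section~\ref{sec:mult-foliations}.

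The paper's route to this theorem (and to the stronger Theorem~\ref{thm:vf-mc}, from which the stated $\degf_p$ version follows via the inequality $\mult_p\Gamma\le\degf_p\Gamma$ that you correctly invoke at the end) deliberately avoids putting a Pfaffian structure on the $\xi$-foliation. Instead it uses Gabrielov's topological formula (Proposition~\ref{prop:gab-mf}), $\mult_p^\xi(P)=\sum_{r=1}^n\chi(F^r_p)$ with $F^r_p$ the Milnor fibers of the families $X^r=\cF[\{P^Q=\cdots=\xi^{r-1}P^Q=0\}]$, and then bounds the Betti numbers of each $F^r_p$ via Le's attaching formula and Theorem~\ref{thm:mf-mc}. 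The Pfaffian machinery there enters only through \emph{constant} one-forms $\d\ell_i$ (generic linear functionals), which are trivially integrable, sidestepping the need for a $\xi$-adapted Pfaffian chain entirely. The later steps of your argument --- the reduction of the deformation multiplicity of the trivial family $\{P=0\}\times\C_e$ to $\mult_p^\xi(P)$, the appeal to Corollary~\ref{cor:mc-general}, the degree bookkeeping from Theorem~\ref{thm:mult-cycle-degs}, and the final passage $\mult_p\Gamma\le\degf_p\Gamma$ --- are fine as far as they go, but they all rest on the unproved (and in general false) Pfaffian chain for $\xi$.
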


The function $\degf_p(\Gamma)$ is somewhat artificial, involving
global properties of the cycle $\Gamma$ when evaluated at a particular
point $p$. The multiplicity function $\mult_p\Gamma$ is a more natural
local analog, and moreover it is clear that
$\mult_p\Gamma\le\degf_p\Gamma$. We will prove the following result.

\begin{Thm}\label{thm:vf-mc}
  There exists a mixed algebraic cycle $\Gamma(P;\xi)\subset M$ such that
  \begin{equation}
    \mult_p^\xi(P) \le \mult_p \Gamma(P;\xi)
  \end{equation}
  whenever the left hand side is finite, and moreover,
  \begin{equation}
    \deg \Gamma^j(P;\xi) \le C_{n,\delta} d^j,
  \end{equation}
  where $C_{n,\delta}$ is a constant depending singly-exponentially on
  $n$ and $\delta$.
\end{Thm}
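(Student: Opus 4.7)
The plan is to reduce Theorem~\ref{thm:vf-mc} to Theorem~\ref{thm:mult-cycle} by describing the $1$-dimensional foliation of trajectories of $\xi$ as a Pfaffian system with $k = n-1$ polynomial one-forms, taking the single defining polynomial $f_1 := P$. For each ordered pair $1 \le i < j \le n$ the polynomial one-form $\w_{ij} := \xi_i\,\d x_j - \xi_j\,\d x_i$ has degree $\delta$ and annihilates $\xi$. Taking $\w_1, \ldots, \w_{n-1}$ to be $n-1$ generic linear combinations of the $\w_{ij}$, at any non-singular point $p$ of $\xi$ these forms span the annihilator of $\xi(p)$, so $\w_1 \wedge \cdots \wedge \w_{n-1}$ is non-vanishing at $p$ and the unique common integral of the distribution is the trajectory $\gamma_p$. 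With this identification, $\mult_p(P;\w_1,\ldots,\w_{n-1}) = \mult_p^\xi(P)$ whenever the latter is finite.

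The principal obstacle is that Theorem~\ref{thm:mult-cycle} demands the stronger chain integrability of Definition~\ref{def:mult-cycle}: one needs a nested chain of integral submanifolds $S^1_p \supset \cdots \supset S^{n-1}_p = \gamma_p$ with $\w_i\rest{S^i} \equiv 0$. Since a generic non-singular polynomial vector field admits no polynomial first integrals, no global polynomial Pfaffian chain descending to $\gamma_p$ exists. To handle this, I would run the iterative Rolle-Gabrielov construction of Lemma~\ref{lem:mc-inductive-step} directly: after the first step, the original form $\w_1$ is replaced in the second summand of~\eqref{eq:mc-inductive-step} by the exact one-form $\d H_1$; the remaining Pfaffian prefix then consists of generic exact forms $\d H_1, \ldots, \d H_{j-1}$, which are trivially integrable with chain given by affine subspaces. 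The residual condition is that a single non-closed form $\w_{k-j+1}$ restricted to a generic affine slice be Frobenius integrable, which fails only on a proper Zariski-closed subset of the parameter space of slices; combining the genericity of the $\d H_j$ with the smoothing deformation of Lemma~\ref{lem:smooth-deform} at each stage suffices to circumvent this issue pointwise, and the multiplicity cycle $\Gamma(P;\xi)$ is then assembled recursively as in Definition~\ref{def:mult-cycle}. Uniformity over $p \in M$ follows as in the proof of Theorem~\ref{thm:mult-cycle}, via the semicontinuity-plus-compactness argument of~\secref{sec:sc-bounds}.

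For the degree bound, Theorem~\ref{thm:mult-cycle-degs} with $k = n-1$, $\beta_1 = d$, and $\alpha_1 = \cdots = \alpha_{n-1} = \delta$ gives $S = (n-1)\delta + d$ and
\begin{equation*}
  \deg\Gamma^j(P;\xi) \le \frac{(n-1)!}{(n-j)!}\,2^{(j-1)(j-2)/2}\,d\,\bigl((n-1)\delta + d\bigr)^{j-1}.
\end{equation*}
The elementary inequality $(n-1)\delta + d \le d\bigl((n-1)\delta + 1\bigr)$ for $d \ge 1$ then yields $\deg\Gamma^j(P;\xi) \le C_{n,\delta}\,d^j$ with $C_{n,\delta}$ singly exponential in $n$ and $\delta$, as required. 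The main conceptual hurdle is the one identified in the second paragraph: producing a workable Pfaffian description of the trajectory despite the absence of polynomial first integrals, which forces one to interleave the chain construction with the generic perturbations of the multiplicity-cycle recursion rather than supplying a \emph{bona fide} polynomial chain \emph{ab initio}.
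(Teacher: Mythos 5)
Your proposal has a genuine gap, located exactly where you flag ``the principal obstacle'' --- and the workaround you sketch does not close it. You correctly observe that a generic non-singular polynomial vector field admits no polynomial Pfaffian chain descending to $\gamma_p$. But you then propose to run the iteration of Lemma~\ref{lem:mc-inductive-step} anyway with $\w\_{n-1}$ chosen as generic linear combinations of $\xi_i\,\d x_j - \xi_j\,\d x_i$. This cannot get off the ground: the very first application of Lemma~\ref{lem:pfaff-rolle} already presupposes the full nested chain $S^{n-1}_p \subset \cdots \subset S^1_p \subset M$, because the flat family $X := \cF\big[\{f\_{n-k}=0\}\cap S^{k-1}_p\big]$ is built from the \emph{penultimate} integral manifold, and the integral manifold hypothesis of Lemma~\ref{lem:rolle-gabrielov} is applied to $\w_k$ inside it. For your forms, none of the intermediate $S^i_p$ ($1\le i\le n-2$) exist. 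Moreover, your claim that after $j$ iterations the ``remaining Pfaffian prefix consists of generic exact forms $\d H_1,\ldots,\d H_{j-1}$'' misreads the recursion: the $j$-th summand of~\eqref{eq:mc-inductive-step} carries the chain $\d H\_{j-1},\w\_{k-j}$, which still contains $k-j$ of the original non-exact forms, and these must themselves constitute a Pfaffian chain. The scenario you describe --- a single non-closed form restricted to a generic affine slice --- arises only at the very last step and does not rescue the preceding ones. Genericity of the $\d H_j$ and the smoothing of Lemma~\ref{lem:smooth-deform} perturb the defining \emph{equations}, not the one-forms, and so cannot manufacture the missing integrability.

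The paper's actual proof takes an entirely different route precisely to avoid this. It invokes Gabrielov's topological characterization (Proposition~\ref{prop:gab-mf}): for generic $Q$ of degree $n-1$ and $P^Q := P + eQ$, one has $\mult_p^\xi(P) = \sum_{r=1}^n \chi(F^r_p)$, where $F^r_p$ is the Milnor fiber at $p$ of $X^r := \cF\big[\{P^Q=\cdots=\xi^{r-1}P^Q=0\}\big]$. Each $\chi(F^r_p)$ is then bounded via Theorem~\ref{thm:mf-mc}, which in turn reduces (through Le's attaching formula, Proposition~\ref{prop:mf-bounds}) to deformation multiplicities along chains of \emph{generic exact} forms $\d\ell_1,\ldots,\d\ell_k$ --- these always give a valid Pfaffian chain of affine subspaces, so Corollary~\ref{cor:mc-general} applies without issue. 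The vector field $\xi$ enters only through the iterated derivatives $\xi^j P^Q$ in the defining equations of $X^r$, never as a Pfaffian chain. Your degree arithmetic in the final paragraph is fine and would give a bound of the claimed shape if the construction it rests on were licit, but as it stands the reduction to Theorem~\ref{thm:mult-cycle} is not available.
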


\begin{Rem} \label{rem:mult-sing}
  During the preparation of this manuscript, the author has found an
  additional, algebraic proof for Theorem~\ref{thm:vf-mc}, cf.
  \cite{mult-sing}. The two results are not comparable: the approach
  of \cite{mult-sing} gives estimates valid also for singular vector
  fields under an additional assumption known as the D-property; on
  the other hand, with this approach the D-property is necessary even
  in the non-singular case in order to produce estimates with explicit
  constants.

  We note that the expression for $\mult_p^\xi(P)$ in terms of local
  multiplicities of cycles was first obtained using the topological
  methods of the present paper. Having this expression in mind, the
  author was later able to produce an inductive algebraic construction
  in \cite{mult-sing}. 
\end{Rem}

The proof of Theorem~\ref{thm:vf-mc} is based on a beautiful
topological characterization of the multiplicity $\mult_p^\xi(P)$
discovered by Gabrielov \cite{gabrielov:mult}, which we now recall.
Let $Q\in\C[x\_n]$ be a polynomial of degree $n-1$, and denote
$P^Q=P+eQ$. For $r=1,\ldots,n$ we define the family
$X_r\subset M\times\C_e$ by
\begin{equation}
  X^r := \cF\big[\{ P^Q = \cdots = \xi^{r-1} P^Q=0  \}\big].
\end{equation}
Denote by $F^r_p$ the Milnor fiber of $X^r$ at the point $p$. Note that
for simplicity of the notation we suppress the dependence of $X^r,F^r_p$
on $Q$.

\begin{Prop}\label{prop:gab-mf}
  Let $p\in M$ be such that $\mult_p^\xi(P)<\infty$. Then
  for a sufficiently generic choice of $Q$,
  \begin{equation}
    \mult_p^\xi(P) = \sum_{r=1}^n \chi(F^r_p).
  \end{equation}
  Moreover, $p$ is a good point of each of the families $X^r$ (i.e.,
  the Milnor fiber $F^r_p$ is effectively smooth).
\end{Prop}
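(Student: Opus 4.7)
The strategy follows Gabrielov~\cite{gabrielov:mult}: reduce to flow-box coordinates, interpret the individual Euler characteristics $\chi(F^r_p)$ via a L\^e-type attaching formula, and match the total with the order $\mu$ along the trajectory by comparison with the univariate situation on $\gamma_p$.

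\medskip

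\textbf{Step 1 (Flow-box setup).} Since $\mult_p^\xi(P)<\infty$ forces $\xi(p)\neq 0$, I would choose analytic coordinates $(x_1,\ldots,x_n)$ centered at $p$ in which $\xi=\partial/\partial x_1$. In these coordinates $P_r=\partial_1^{r-1}(P+eQ)$, the trajectory $\gamma_p$ is the $x_1$-axis, and $\mu:=\mult_p^\xi(P)$ equals $\ord_{x_1}P(x_1,0,\ldots,0)$. I would impose genericity conditions on $Q$ (in the spirit of Lemma~\ref{lem:smooth-deform}) guaranteeing: (a) the univariate restriction $(P+\e Q)|_{\gamma_p}$ has exactly $\mu$ simple zeros converging to $0$ as $\e\to 0$; (b) each family $X^r$ is a reduced complete intersection of pure codimension $r$ with $(p,0)$ an isolated singularity; (c) each Milnor fiber $F^r_p$ is effectively smooth for small $\e\neq 0$, which is precisely the ``good point'' clause.

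\medskip

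\textbf{Step 2 (Euler-characteristic recursion).} Since $X^{r+1}=X^r\cap\{P_{r+1}=0\}$, L\^e's attaching formula --- in exactly the form used in the proof of Proposition~\ref{prop:mf-bounds} --- yields
\begin{equation*}
\chi(F^r_p)-\chi(F^{r+1}_p)=(-1)^{n-r}\,c_r,
\end{equation*}
where $c_r$ counts critical points of $P_{r+1}$ on $(X^r)^\e$ converging to $p$ as $\e\to 0$. Together with $\chi(F^n_p)=|F^n_p|$ (since $F^n_p$ is zero-dimensional), this realizes $\sum_{r=1}^n\chi(F^r_p)$ as a signed combination of the $c_r$ and $|F^n_p|$, all of which are intersection numbers of explicit polynomial systems.

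\medskip

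\textbf{Step 3 (Identification with $\mu$ --- main obstacle).} The crux is to show that this signed combination equals $\mu$. My plan is to fiber $X^r$ via the projection $\pi:M\times\C_e\to\C_{x_1}\times\C_e$ and realize each $X^r$ as a ramified cover; the individual $c_r$ then acquire an interpretation as ramification contributions, and the total telescopes into a Riemann--Hurwitz-type identity. On a generic $\e$-fiber restricted to $\gamma_p$, this identity reduces to the elementary fact that a polynomial of degree $d$ in $x_1$ together with its first $n-1$ derivatives account for $d+(d-1)+\cdots+(d-n+1)$ zeros, while genericity of $Q$ guarantees that only zeros clustering at $p$ survive --- contributing precisely $\mu$ by~(a). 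The technical obstacle is showing that the ``far'' contributions (from zeros that do not converge to $p$) cancel out in the signed sum, which I expect to handle by a stratification argument over the branch locus of $\pi$ and the monodromy action on the cover.
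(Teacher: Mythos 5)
The paper does not prove Proposition~\ref{prop:gab-mf} itself --- it cites Gabrielov's \cite{gabrielov:mult} for the statement --- so the comparison must be with a reconstruction of that argument. Your Step~1 is sound, and the eventual idea you gesture at in Step~3 (stratify the base of the flow-box projection $\pi$ and use additivity/multiplicativity of $\chi$ over the strata, noting that on each trajectory the orders of vanishing sum to $\mu$) is in fact close to the actual mechanism. But Step~2, which you present as a firm intermediate result, is wrong, and it is the load-bearing step of your plan.

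The form of L\^e's attaching formula invoked in Proposition~\ref{prop:mf-bounds} applies only when the slicing function is a \emph{generic linear functional}: the Milnor fibre of $X$ is built from the Milnor fibre of $X\cap\{\ell=\ell(p)\}$ by attaching cells. You apply it with the slicing function $P_{r+1}=\xi^r P^Q$, which is a specific degree-$d$ polynomial and is in no sense generic relative to $X^r$. The claimed identity $\chi(F^r_p)-\chi(F^{r+1}_p)=(-1)^{n-r}c_r$ fails already in the simplest example: take $n=2$, $\xi=\partial_1$, $P=x_1^\mu$, $p=0$, and $Q$ generic with $Q(0,0)\neq 0$. Then $(X^1)^\e=\{x_1^\mu+\e Q=0\}$ splits near $0$ into $\mu$ disjoint smooth discs, so $\chi(F^1_p)=\mu$, while $X^2$ has no points converging to $0$ (the equations force $Q(0,x_2)=0$ in the limit, which generic $Q$ excludes), so $\chi(F^2_p)=0$. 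Your formula would give $\mu-0=(-1)^{2-1}c_1=-c_1\le 0$, a contradiction; indeed $F^1_p$ ($\mu$ discs) is not obtained from $F^2_p$ ($=\emptyset$) by attaching $1$-cells, since there is nothing to attach to. The moral is that slicing by $\xi^r P^Q$ does not simplify the Milnor fibre the way a generic hyperplane does --- it can kill it entirely while $F^r_p$ remains large --- so there is no L\^e-type recursion available here. The correct route, and the one underlying Gabrielov's proof, is to skip the recursion and directly compare all the $\chi(F^r_p)$ at once: stratify a small base disc $D_{\delta'}\subset\C^{n-1}_{x_2,\ldots,x_n}$ by the partition of $\mu$ recording the multiplicities of the zeros of $P^\e$ on the trajectory; over each stratum $\pi\colon(X^r)^\e\to D_{\delta'}$ is a covering of some degree $d^r$, and for each stratum $\sum_r d^r=\mu$ (since the number of roots of multiplicity $\ge r$, summed over $r$, equals the total multiplicity $\mu$); then additivity of $\chi$ over the stratification gives $\sum_r\chi(F^r_p)=\mu\cdot\chi(D_{\delta'})=\mu$. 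This is what your Step~3 should be, stated as the whole argument rather than as a patch for the discrepancies left by Step~2.
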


The proof of Theorem~\ref{thm:vf-mc} is now a simple exercise using
Theorem~\ref{thm:mf-mc}.

\begin{proof}[Proof of Theorem~\ref{thm:vf-mc}]
  Let $p\in M$ be such that $\mult_p^\xi(P)<\infty$. Choose $Q$
  sufficiently generic so that Proposition~\ref{prop:gab-mf} applies.
  Let $r=1,\ldots,n$. Applying Theorem~\ref{thm:mf-mc} to $X^r$ we
  obtain for every $q=0,\ldots,n-r$ a cycle $\Gamma_{r,q}$ such that
  \begin{equation}
    b_q(F_p^r) \le \mult_p \Gamma_{r,q}.
  \end{equation}
  Noting that the Euler characteristic is bounded by the sum
  of Betti numbers, and setting $\Gamma_r:=\sum_q\Gamma_{r,q}$
  gives
  \begin{equation}
    \chi(F_p^r) \le \mult_p\Gamma_r.
  \end{equation}
  Finally setting $\Gamma(P;\xi):=\sum_r\Gamma_r$ and using
  Proposition~\ref{prop:gab-mf} gives
  \begin{equation}
    \mult_p^\xi(P) \le \mult_p \Gamma(P;\xi).
  \end{equation}

  The same arguments show that one can find a cycle $\Gamma(P;\xi)$
  satisfying the conditions of the theorem for any finite collection
  of points $S\subset M$. One can then conclude that (for sufficiently
  generic $Q$) the bound is in fact uniform over all $p\in M$ in
  the same way as in the proof of Theorem~\ref{thm:mult-cycle}.
\end{proof}

\subsubsection{Multiplicity estimates for non-singular foliations}
\label{sec:mult-foliations}

Let $\xi\_m$ be $m$ commuting polynomial vector fields defining a
foliation $\cF$ on $M$ and $P\_m$ be $m$ polynomial functions. We
define the multiplicity $\mult_p(P\_m;\cF)$ to be the intersection
multiplicity between the leaf of $\cF$ at $p$ and $\{P\_m=0\}$ (or
infinity if the foliation is singular at $p$, or the intersection is
not proper). In other words, we extend our usual definition for
Pfaffian foliations to the case of arbitrary foliations. Note
however that we intentionally introduce this notion for proper
intersections rather than general deformations as we did in the
Pfaffian case.

It is reasonable to ask whether Theorems~\ref{thm:mult-cycle}
and~\ref{thm:vf-mc} can be generalized to a result about general
foliations, as follows.

\begin{Conj}\label{con:foliation-mc}
  There exists a mixed algebraic cycle $\Gamma(P\_m;\cF)\subset M$
  such that
  \begin{equation}
    \mult_p^\cF(P\_m) \le \mult_p \Gamma(P\_m;\cF)
  \end{equation}
  whenever the left hand side is finite, and moreover,
  \begin{equation}
    \deg \Gamma^j(P\_m;\cF) \le C_{n,\cF} d^{n-j}
  \end{equation}
  where $C_{n,\cF}$ is a constant depending only on $n,\cF$.
\end{Conj}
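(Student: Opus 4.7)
The plan is to imitate the two-step strategy used to prove Theorem~\ref{thm:vf-mc}: (i) express $\mult_p^\cF(P\_m)$ as a signed sum of Euler characteristics of Milnor fibers of auxiliary algebraic deformations, and (ii) apply Theorem~\ref{thm:mf-mc} to each such Milnor fiber to bound it by the local multiplicity of an algebraic cycle of controlled degree. The cycle $\Gamma(P\_m;\cF)$ would then be the sum of the cycles produced in step (ii), and the passage from a pointwise bound to a uniform one would proceed exactly as at the end of the proof of Theorem~\ref{thm:mult-cycle}, using the compactness statement of \secref{sec:sc-bounds}.

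For step (i), I would first introduce generic polynomial perturbations $P_i^Q := P_i + e Q_i$ with $\deg Q_i < \deg P_i$, and for each multi-index $\vec r = (r_1,\ldots,r_m)\in\Z_{\ge 0}^m$ form the flat family
\begin{equation*}
  X^{\vec r} := \cF\bigl[\{\xi_1^{s_1}\cdots\xi_m^{s_m} P_i^Q = 0 \text{ for all } i=1,\ldots,m \text{ and all } \vec s\le\vec r\}\bigr],
\end{equation*}
which is well-defined because the $\xi_i$ commute. When $m=1$ these are precisely Gabrielov's families $X^r$. The goal is to establish a combinatorial identity of the form
\begin{equation*}
  \mult_p^\cF(P\_m) = \sum_{\vec r} \epsilon_{\vec r}\, \chi(F^{\vec r}_p),
\end{equation*}
with signs $\epsilon_{\vec r}\in\{\pm1\}$ coming from inclusion–exclusion over the lattice of multi-indices. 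One natural inductive route is to use the flow of $\xi_1$ to decompose $\mult_p^\cF(P\_m)$ into a sum of vector-field multiplicities of $P_1$ along the trajectories of $\xi_1$ restricted to the $(m-1)$-dimensional leaves of the transverse foliation $\cF/\xi_1$, and then apply Proposition~\ref{prop:gab-mf} in the $\xi_1$-direction.

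Once such an identity is in hand, step (ii) is comparatively routine. For generic $Q\_m$, a Bertini–Sard argument as in Proposition~\ref{prop:gab-mf} shows that $p$ is a good point of every $X^{\vec r}$, so Theorem~\ref{thm:mf-mc} yields cycles $\Gamma_{\vec r,j}$ with $b_j(F^{\vec r}_p)\le\mult_p\Gamma_{\vec r,j}$ and $\deg\Gamma_{\vec r,j}^{n-j}=O(d^{n-j})$. The degree bound uses the elementary estimate $\deg(\xi_1^{s_1}\cdots\xi_m^{s_m}P_i)\le\deg P_i+|\vec s|(\delta-1)$, together with an a priori bound on the multi-indices $\vec r$ that matter, which would emerge from step (i).

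The main obstacle is unquestionably step (i): there is no immediate higher-dimensional analogue of the Riemann–Hurwitz identity underlying Proposition~\ref{prop:gab-mf}, which is tied to the one-dimensional nature of a single orbit. The inductive slicing route must contend with the fact that the transverse foliation $\cF/\xi_1$ is defined only locally and is generally not algebraic, so a direct induction on $m$ seems unlikely to work as stated; some new geometric input seems necessary, perhaps combining Le's attaching formula with a choice of perturbations $Q_i$ adapted to $\cF$, or adapting the algebraic construction of \cite{mult-sing} to the commutative foliation setting. A secondary difficulty is to control the range of the relevant multi-indices $\vec r$ in a manner that depends only on $n$ and $\cF$, so that the final constant $C_{n,\cF}$ is independent of $d$.
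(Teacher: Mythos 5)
The statement you are addressing is Conjecture~\ref{con:foliation-mc}, which the paper does \emph{not} prove. The paper records it as an open problem and only gestures at a possible route (sharpening the Noetherian-chain approach of \cite{gk:noetherian} in combination with ideas from \cite{mult-morse}), explicitly deferring the details to a separate publication. So there is no proof in the paper to compare against, and no amount of detail in your step (ii) can close the argument as long as step (i) is missing.

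Your self-diagnosis of the gap is accurate and is exactly the point. The plan would deliver the conjecture if $\mult_p^\cF(P\_m)$ could be written as a signed sum of Euler characteristics of Milnor fibers of algebraic deformations, but Gabrielov's Proposition~\ref{prop:gab-mf} is intrinsically one-dimensional: it rests on the Riemann--Hurwitz identity for a branched covering by a single orbit (as in the sketch of Lemma~\ref{lem:rolle-gabrielov}), and there is no higher-dimensional analogue available off the shelf. Your proposed fallback of inducting on $m$ via the transverse foliation $\cF/\xi_1$ fails for the reason you yourself flag: that quotient foliation is only locally defined and is not algebraic, so the inductive hypothesis, which requires polynomial data of bounded degree, cannot be invoked. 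Two further cautions. First, the paper's own envisaged route is structurally different from yours: it does not pass through Milnor fibers at all, but through the algebraic/Noetherian machinery of \cite{gk:noetherian} (possibly refined as in \cite{mult-sing}), so even a successful completion of your step (i) would be a genuinely new argument rather than a reconstruction of the paper's. Second, the conjecture as stated concerns \emph{proper} intersections only, and the paper warns that the deformation version (which your perturbations $P_i^Q=P_i+eQ_i$ implicitly target) is expected to be strictly harder; you should be careful that your argument, if completed, does not silently require the stronger deformation statement in the inductive step.
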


In \cite{gk:noetherian} this problem was studied for the case of a
single point $p$ (i.e., with estimates not depending on $p$). More
specifically, it was proven that the multiplicity in this case is
bounded by $C_{n,\cF} d^{2n}$ where $C_{n,\cF}$ is some constant. It
appears that in combination with the ideas of \cite{mult-morse} the
approach of \cite{gk:noetherian} can be sharpened to give an estimate
$C_{n,\cF} d^n$, and moreover to prove
Corollary~\ref{con:foliation-mc} in full generality (albeit with worse
constants). However, this result falls outside the scope of this paper
and will appear separately.

We note that the formulation of Corollary~\ref{con:foliation-mc}
applies only to proper intersections, rather than to multiplicities of
deformations as in the case of Theorem~\ref{thm:mult-cycle}. It is in
fact natural to conjecture that the same result would hold for
arbitrary deformations. However, this result may require new ideas. A
conjecture of this type was made in \cite{gk:noetherian} for the case
of a single point $p$ (with a less accurate asymptotic), and proved
under a minor technical condition in \cite{noetherian-def}. But it is
unclear whether this approach can be extended to produce multiplicity
cycles and the precise asymptotic of
Conjecture~\ref{con:foliation-mc}.

\subsection{Multiplicity estimates on group varieties}
\label{sec:group-estimates}

Suppose that our ambient space is a commutative algebraic group
variety $G$ with the associated Lie algebra $\fg$ of invariant vector
fields. Let $P$ be a regular function on $G$. Results imposing
restrictions on the geometry of the set of points where $P$ vanishes
to a given order, with respect to one or several vector fields, are
known as \emph{multiplicity estimates on group varieties}. Several
authors have studied such multiplicity estimates, with important
applications in diophantine approximation and transcendental number
theory.

The area was initiated by Masser and W\"ustholz in the two important
papers \cite{mw:groups1,mw:groups2}. In \cite{mw:groups1}, the authors
consider the zeros of $P$ without prescribing the order along a vector
field. In \cite{mw:groups2}, the authors consider zeros of $P$ of a
prescribed order $T$ in the direction of an invariant vector field
$\xi\in\fg$. Both papers rely on an algebraic technique involving
manipulation of ideals, which was later elaborated to account for
zeros of a prescribed order in the direction of several vector fields
by W\"ustholz \cite{W:multgrps} and strengthened by Philippon in
\cite{philippon:groups}. The result of Philippon is essentially
optimal and we make no attempt at improving it. Instead, our goal is
to show that the language of multiplicity can be applied in this
context.

In \cite{moreau:zeros1,moreau:zeros2}, Moreau gave a simple geometric
proof of the main result of \cite{mw:groups1}. However, the treatment
of multiplicities still required the more general algebraic approach
of \cite{mw:groups2}. In this section we present an extension of
Moreau's geometric approach to the case involving multiplicities,
relying on the notion of a multiplicity cycle. We begin by presenting
a result of \cite{mw:groups1} in the formulation of
\cite{moreau:zeros2}.

For simplicity of the presentation we restrict ourselves to the case
$G=(\C^*)^n$, and present estimates in terms of a pure degree $d$
rather than a sequence of mixed degrees (although the results of this
section extend to arbitrary $G$ and more general notions of degree
with minor modifications). To maintain the conventional notation in
our presentation, we use additive notation for the addition rule in
$G$. Following Moreau, for a set $\Sigma\subset G$ we denote
\begin{align*}
  \Sigma^{(p)} &:= \{\gamma_1+\cdots+\gamma_p : \gamma_i\in\Sigma\}\\
  \Sigma^{(0)} &:= \{0\}.
\end{align*}
For a subgroup $H$ of $G$, we denote by $\#(\Sigma+H)/H$ the size of
the coset space $(\Sigma+H)/H$. With these notations Moreau
\cite{moreau:zeros1} proves:
\begin{Thm}\label{thm:moreau}
  Let $\Sigma\subset G$ be a finite subset containing $0$, and
  $P\in\C[x\_n]$ be a polynomial of degree $d$. Suppose that for any
  proper algebraic subgroup $H\subset G$ we have
  \begin{equation}
    \#(\Sigma+H)/H > d^{\codim_G H}.
  \end{equation}
  Then
  \begin{equation}
    \bigcap_{\gamma\in\Sigma^{(n)}} (-\gamma+\{P=0\}) = \emptyset
  \end{equation}
\end{Thm}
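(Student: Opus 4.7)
The plan is to argue by contradiction and build cycles inductively. Suppose some $x_0\in\bigcap_{\gamma\in\Sigma^{(n)}}(-\gamma+V)$ where $V:=\{P=0\}$. Translating by $-x_0$, which preserves $\deg V=d$ since $G=(\C^*)^n$ is homogeneous, I may assume outright that $\Sigma^{(n)}\subset V$.

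I would then construct, for $k=1,\ldots,n$, an effective algebraic cycle $\Gamma_k$ on $G$ of pure codimension $k$, satisfying $\deg\Gamma_k\le d^k$ and $\Sigma^{(n+1-k)}\subseteq\supp\Gamma_k$. The base case is $\Gamma_1:=[V]$, which contains $\Sigma^{(n)}$ by hypothesis. For the inductive step, I set $\Gamma_{k+1}:=\Gamma_k\cdot[-\sigma_{k+1}+V]$ for some $\sigma_{k+1}\in\Sigma$ chosen so that the intersection is proper; Bezout's theorem in $G$ then gives $\deg\Gamma_{k+1}\le d^{k+1}$, and the inclusion $\Sigma^{(n-k)}\subseteq\supp\Gamma_{k+1}$ is immediate: for $\tau\in\Sigma^{(n-k)}$ one has $\tau\in\Sigma^{(n+1-k)}\subseteq\supp\Gamma_k$ and $\tau+\sigma_{k+1}\in\Sigma^{(n+1-k)}\subseteq V$ (using $0\in\Sigma$ throughout). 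At $k=n$, $\Gamma_n$ is a zero-dimensional cycle of degree $\le d^n$ whose support contains $\Sigma^{(1)}=\Sigma$, so $\#\Sigma\le d^n$; on the other hand, the hypothesis of Theorem~\ref{thm:moreau} applied with the trivial subgroup $H=\{0\}$ reads $\#\Sigma=\#(\Sigma+\{0\})/\{0\}>d^n$, the desired contradiction.

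The main obstacle is guaranteeing the existence, at each step, of a $\sigma_{k+1}\in\Sigma$ making the intersection proper: i.e., no irreducible component $W$ of $\supp\Gamma_k$ should satisfy $\sigma_{k+1}+W\subseteq V$. If for some component $W$ this failed for \emph{every} $\sigma\in\Sigma$, then $\Sigma+W\subseteq V$, and since $W$ is invariant under its stabilizer $H_W:=\mathrm{Stab}_G(W)$, one would obtain $\#(\Sigma+H_W)/H_W$ distinct translates of $W$ packed inside the hypersurface $V$. A Bezout-type estimate in the quotient $G/H_W$ would then force $\#(\Sigma+H_W)/H_W\le d^{\codim H_W}$, contradicting the hypothesis. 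When $\supp\Gamma_k$ has several irreducible components, I would treat them separately (possibly with a different $\sigma_{k+1}^{(i)}$ per component $W_i$) and sum the resulting intersection cycles with their multiplicities; this flexibility, afforded by working with effective cycles rather than with underlying varieties, is in the spirit of the multiplicity-cycle language promoted by the present paper.
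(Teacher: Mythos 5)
Your strategy (inducting on codimension of a cycle $\Gamma_k$, with the degree controlled by Bezout and the support controlled by the hypothesis $\Sigma^{(n)}\subset V$) is squarely in the spirit of the paper, but the pivotal step is not correct for $k\ge2$. You need, for each irreducible component $W$ of $\supp\Gamma_k$ (so $\codim_G W=k$), some $\sigma\in\Sigma$ with $\sigma+W\not\subset V$, and you argue that if $\Sigma+W\subset V$ then the $\#(\Sigma+H_W)/H_W$ pairwise distinct translates of $W$ inside $V$ would violate a ``Bezout-type estimate in $G/H_W$''. No such estimate exists once $k\ge2$: then $W$ has codimension $k>1$ while $V$ is a hypersurface, and containment $\sigma+W\subset V$ places no bound whatsoever on how many distinct translates of $W$ can sit inside $V$ --- a degree-$d$ hypersurface contains infinitely many pairwise distinct translated points, curves, and so on. The Bezout-style pigeonhole is valid only when the translated object has the \emph{same} top dimension as the container, which is exactly the case $k=1$; that is why your base step works and the induction does not.

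The paper's proof (Lemma~\ref{lem:moreau}) avoids this by not holding $V$ fixed. At each step it replaces $V$ by a mixed-dimensional variety $V'$ of strictly smaller top dimension, obtained by cutting each top-dimensional component $V_i$ with a suitably translated degree-$d$ hypersurface and retaining the lower-dimensional parts of $V$, while controlling $\wt_d V'\le\wt_d V$. The pigeonhole there compares $\gamma_i+V_i$ with the top-dimensional part $V^m$ of the \emph{current} variety --- objects of equal dimension $m$ --- so containment $\gamma_i+V_i\subset V$ forces $\gamma_i+V_i$ to be one of the $s\le\deg V^m\le d^{\codim_G V^m}\wt_d V$ components of $V^m$, and the hypothesis gives $\#(\Sigma+H_i)/H_i>s$. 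If you want to keep a pure-codimension-cycle formulation you would have to compare $\sigma+W_i$ against $\supp\Gamma_k$ (same codimension), but then ``$\sigma+W_i$ is not a component of $\supp\Gamma_k$'' does not yield ``$\sigma+W_i\not\subset V$'', so the intersection $\Gamma_k\cdot[-\sigma+V]$ need not be proper; repairing this essentially reconstructs the paper's shrinking-variety argument.
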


If $V\subset G$ is an irreducible variety, we define its
\emph{$d$-weight} to be $\wt_d V:=\deg V / d^{\codim_G V}$. We extend
this by linearity to arbitrary varieties and cycles in $G$. For any
(effective) cycle $\Gamma$, we denote by $\o\Gamma$ the variety
underlying $\Gamma$ (i.e. without associated multiplicities). With
this notation, Theorem~\ref{thm:moreau} follows from the following
more general assertion.

\begin{Lem}\label{lem:moreau}
  Let $\Sigma\subset G$ be a finite subset containing $0$. Let
  $V \subset G$ be a variety of top dimension $m$ which is cut out
  set-theoretically by polynomials of degree at most $d$. Suppose that
  for every proper algebraic subgroup $H\subset G$ we have
  \begin{equation}\label{eq:coset-bd}
    \#(\Sigma+H)/H > d^{\codim_G H} \wt_d V.
  \end{equation}
  Then
  \begin{equation}\label{eq:coset-cap}
    \bigcap_{\gamma\in\Sigma^{(m+1)}} (-\gamma+V) = \emptyset
  \end{equation}
\end{Lem}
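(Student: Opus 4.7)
The plan is to argue by contradiction, following Moreau's geometric template but tracking cycle structure rather than set-theoretic intersections. Suppose the intersection in~\eqref{eq:coset-cap} contains a point $p$; by translating $V \mapsto -p+V$ we may assume $p=0$, which forces $\Sigma^{(m+1)} \subseteq V$. The goal is then to produce, out of this containment, either a $0$-dimensional intersection cycle whose length violates~\eqref{eq:coset-bd} applied with $H=\{0\}$, or a proper algebraic subgroup $H$ whose coset quotient $\#(\Sigma+H)/H$ violates~\eqref{eq:coset-bd}.

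I would construct inductively a chain $W_0 \supsetneq W_1 \supsetneq \cdots$ of irreducible subvarieties, where $W_0$ is an $m$-dimensional component of $V$ retaining the largest share of $\Sigma^{(m+1)}$, and each $W_{i+1}$ is a top-dimensional component of the intersection cycle $W_i\cdot(-\gamma_i+W_i)$ for a carefully chosen $\gamma_i\in\Sigma$. The $\gamma_i$ are selected greedily so that $\dim W_{i+1}<\dim W_i$ whenever such a choice exists, while ensuring that $W_{i+1}$ retains a quantitatively large portion of translates of $\Sigma^{(m-i)}$. Along this chain the $d$-weight $\wt_d W_i$ is tracked via the cycle-theoretic Bezout bound, which in our setting is supplied by Corollary~\ref{cor:mc-general} together with the degree estimates of Theorem~\ref{thm:mult-cycle-degs}.

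Two cases then arise. If the chain descends all the way to dimension $0$, then $W_m$ is a $0$-cycle whose total multiplicity is at most $d^n\,\wt_d V$ by the cycle-level Bezout estimate, while its support contains enough points of $\Sigma$ to contradict~\eqref{eq:coset-bd} with $H=\{0\}$. If instead the descent halts at some $W_i$ --- meaning that for every $\gamma$ in a large subset of $\Sigma$ we have $-\gamma+W_i=W_i$ --- then the stabilizer
\[
H := \{\, g\in G : -g+W_i = W_i \,\}
\]
is a proper algebraic subgroup of $G$ inside which that large subset of $\Sigma$ lies in a single coset. The cycle-weight accounting carried out along the descent then yields $\#(\Sigma+H)/H \le d^{\codim_G H}\,\wt_d V$, again contradicting~\eqref{eq:coset-bd}.

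The main obstacle is the combinatorial bookkeeping in the greedy construction: at each inductive step one must coordinate the weight $\wt_d W_i$ with the number of $H$-cosets encountered, so as to preserve the precise exponent $\codim_G H$ on the right-hand side of~\eqref{eq:coset-bd}. This coordination is the core of Moreau's original argument in the hypersurface case; the contribution here is to phrase it multiplicatively at the level of cycles, where the multiplicity cycle of Corollary~\ref{cor:mc-general} provides a clean substitute for set-theoretic intersection numbers and absorbs the higher-codimension or non-reduced nature of $V$ through the weight factor $\wt_d V$.
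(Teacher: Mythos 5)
Your overall intuition --- a dichotomy between a dimension drop (handled by Bezout-type weight accounting) and a stabilizer (handled by the coset-count hypothesis) --- is indeed the mechanism behind Moreau's argument, and the paper's proof uses exactly that dichotomy. But the specific construction you propose for realizing it does not work, and the route the paper takes is genuinely different and cleaner.

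The central gap is in your descending chain. You take $W_{i+1}$ to be a top-dimensional component of the intersection cycle $W_i\cdot(-\gamma_i+W_i)$, intersecting an irreducible variety with a \emph{translate of itself}. This does not preserve $d$-weight. If $W_i$ has codimension $c_i$ and degree $D_i$, a proper self-intersection has codimension $2c_i$ and degree up to $D_i^2$, so $\wt_d$ can at best be bounded by $(\wt_d W_i)^2$, which grows when $\wt_d V>1$ (nothing in the hypothesis forces $\wt_d V\le 1$). If the intersection is improper --- which is precisely the case you are closest to the stabilizer dichotomy --- Bezout gives no degree bound at all. By contrast, the paper's inductive step intersects each top-dimensional component $V_i$ of $V$ with a translated \emph{defining hypersurface} $-\gamma_i+\{P_i=0\}$ of degree $\le d$: this is a proper intersection that drops dimension by exactly one and gives $\wt_d\big(V_i\cap(-\gamma_i+\{P_i=0\})\big)\le\wt_d V_i$, which is the bound the exponent in~\eqref{eq:coset-bd} is calibrated against. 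The existence of a suitable $\gamma_i$ comes from the stabilizer $H_i$ of $V_i$: by~\eqref{eq:coset-bd} applied to $H_i$, the number of cosets $\#(\Sigma+H_i)/H_i$ exceeds the number $s$ of top-dimensional components, so some $\gamma_i\in\Sigma$ moves $V_i$ off $V$. The paper then sets $V':=\bigcup_i V_i'\cup V^{m-1}\cup\cdots\cup V^0$, observes $\bigcap_{\gamma\in\Sigma}(-\gamma+V)\subset V'$ with $\wt_d V'\le\wt_d V$ and top dimension $\le m-1$, and finishes by induction on $m$. No greedy bookkeeping of ``shares of $\Sigma^{(m-i)}$'' is needed: one simply shaves off one layer of $\Sigma$ per dimension, using $\Sigma^{(m+1)}=\Sigma+\Sigma^{(m)}$.

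Two further points. First, Corollary~\ref{cor:mc-general} and Theorem~\ref{thm:mult-cycle-degs} are the Pfaffian multiplicity-cycle results of~\secref{sec:mc}; they have no role in this purely algebraic Bezout estimate, and citing them as the ``cycle-theoretic Bezout bound'' misidentifies the tool. (In~\secref{sec:group-estimates} those Pfaffian results enter only later, in Proposition~\ref{prop:vt-weight}, to control the weight of the order-$T$ vanishing locus before Lemma~\ref{lem:moreau} is applied.) Second, your endgame --- ``the cycle-weight accounting then yields $\#(\Sigma+H)/H\le d^{\codim_G H}\,\wt_d V$'' --- is asserted, not derived; knowing that a large subset of $\Sigma$ lies in the stabilizer $H$ (hence in one coset) does not by itself bound the total number of cosets $\#(\Sigma+H)/H$. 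The paper sidesteps this by using the stabilizer bound in the ``forward'' direction (many cosets $\Rightarrow$ some translate escapes $V$) rather than trying to extract a coset-count upper bound from the halted chain.
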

\begin{proof}
  We proceed by induction on $m$. For $m=0$, it follows
  from~\eqref{eq:coset-bd} (with $H=\{0\}$) that
  $\#\Sigma>\deg V=\#V$. Thus for any $g\in G$ there exists
  $\gamma\in\Sigma$ with $g+\gamma\not\in V$,
  proving~\eqref{eq:coset-cap}.

  Assume now that $m>0$ and write $V^m=V_1\cup\cdots\cup V_s$.
  Let $i=1,\ldots,s$ and let $H_i$ denote the stabilizer of $V_i$ in
  $G$. Certainly $\codim_G H_i\ge m$, so by~\eqref{eq:coset-bd} we
  have
  \begin{equation}
    \#(\Sigma+H_i)/H_i > d^m \wt_d V \ge \deg V^m \ge s
  \end{equation}
  It follows that there exists an element $\gamma_i\in\Sigma$ such
  that $\gamma_i+V_i\neq V_j$ for $j=1,\ldots,s$, and hence
  $\gamma_i+V_i\not\subset V$. Since $V$ is cut out by polynomials of
  degree bounded by $d$, we can find such a polynomial $P_i$ which
  vanishes on $V$ but not on $\gamma_i+V_i$. Set
  \begin{equation}
    V'_i=V_i\cap(-\gamma_i+\{P_i=0\}).
  \end{equation}
  By the Bezout theorem, $\wt_dV'_i\le\wt_d V_i$.

  We now set
  \begin{equation}
    V':=\bigcup_{i=1}^s V'_i \cup V^{m-1}\cup\cdots\cup V^0.
  \end{equation}
  Then the top-dimension of $V'$ is at most $m-1$ and
  $\wt_dV'\le\wt_dV$. By construction we have
  \begin{equation}
    \bigcap_{\gamma\in\Sigma} (-\gamma+V) \subset V'.
  \end{equation}
  Finally, by induction we have
  \begin{equation}
    \bigcap_{\gamma\in\Sigma^{(m+1)}} (-\gamma+V) \subset
    \bigcap_{\gamma\in\Sigma^{(m)}} (-\gamma+V') = \emptyset
  \end{equation}
  as claimed.
\end{proof}

Let $V\subset G$ be an irreducible variety and $H$ a divisor. If
$V\not\subset H$ we denote by $[V]*H$ the intersection of $[V]$ and
$H$ as cycles. Otherwise, we denote $[V]*H:=V$. We extend this by
linearity to define $\Gamma*H$ for an arbitrary cycle $\Gamma$. Note
that $*$ is not associative, though it is associative at the level of
the underlying sets. If $\deg H\le d$ then by the Bezout theorem,
$\wt_d \Gamma*H\le\wt_d\Gamma$.

\begin{Lem} \label{lem:mult-nodrop}
  Let $\Gamma$ be a mixed cycle in $G$ and $H$ a divisor. For
  any $p\in H$ we have
  \begin{equation}
    \mult_p \Gamma*H \ge \mult_p\Gamma.
  \end{equation}
\end{Lem}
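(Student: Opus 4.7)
By linearity of both $*$ and $\mult_p$ in the cycle argument, I would reduce immediately to the case $\Gamma=[V]$ for an irreducible subvariety $V\subset G$ of dimension $j$. Then I would split into two cases according to the definition of $*$: if $V\subset H$, then by definition $[V]*H=V$, so $\mult_p([V]*H)=\mult_p V$ and there is nothing to prove. The substantive case is $V\not\subset H$, where $[V]*H$ coincides with the classical intersection cycle $[V]\cdot H$, of pure dimension $j-1$.

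In this case the plan is to express $\mult_p([V]\cdot H)$ via the definition of algebraic multiplicity and then push the ``linear slice'' through to the other side of $H$. Concretely, pick a generic linear subspace $L\subset G$ of codimension $j-1$ passing through $p$. By definition,
\begin{equation}
  \mult_p([V]\cdot H) \;=\; i\bigl(p,\,([V]\cdot H)\cdot L\bigr),
\end{equation}
and by the associativity of intersection multiplicities on the smooth variety $G=(\C^*)^n$ (applied to the proper intersection of $V$ with the complementary-codimension cycle $H\cdot L$, which is automatic for $L$ generic since $V\not\subset H$) this equals $i(p,\,[V]\cdot(H\cdot L))$.

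At this point I would invoke the classical inequality of Samuel/Serre for proper intersections on a smooth variety:
\begin{equation}
  i\bigl(p,\,X\cdot Y\bigr) \;\ge\; \mult_p X\cdot\mult_p Y,
\end{equation}
applied with $X=V$ and $Y=H\cdot L$. Since $p\in H$ and $p\in L$, the effective cycle $H\cdot L$ passes through $p$, so $\mult_p(H\cdot L)\ge 1$. Combining gives $i(p,[V]\cdot(H\cdot L))\ge\mult_p V\cdot 1=\mult_p V$, which is what we want.

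The only nontrivial ingredient is the Samuel inequality, which is classical and holds in the smooth ambient $G$; the associativity step and the genericity of $L$ (ensuring proper intersection of $V$ with $H\cdot L$, and that the intersection multiplicity of $H\cdot L$ with a further generic linear slice computes $\mult_p(H\cdot L)$) are routine. Thus I expect the entire argument to be short once the reduction by linearity and the case split on $V\subset H$ have been made, and the main ``obstacle'' is really just citing the right form of the multiplicity inequality.
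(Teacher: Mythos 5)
Your reduction to $\Gamma=[V]$, the case split on $V\subset H$, and the idea of slicing by a generic $L$ of codimension $\dim V - 1$ through $p$ all match the paper's proof exactly. The two arguments diverge only in the final step. The paper observes that $\mult_p V$ is computed by intersecting $V$ with $\dim V$ \emph{generic} divisors through $p$, while $\mult_p([V]*H)$ is computed by intersecting $V$ with $H$ together with $\dim V - 1$ generic divisors through $p$, and then concludes directly from the \emph{upper semicontinuity} of the local intersection number: the generic configuration of slicing divisors through $p$ realizes the minimum, so replacing one of them by the particular divisor $H$ can only increase the multiplicity at $p$. You instead reassociate to $i(p,[V]\cdot(H\cdot L))$ and invoke the Samuel/Serre multiplicity inequality $i(p,X\cdot Y)\ge \mult_p X\cdot\mult_p Y$, together with $\mult_p(H\cdot L)\ge 1$.

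Both routes reach the desired conclusion, but yours invokes a strictly heavier result than is needed. Two things deserve care in your version. First, $H\cdot L$ is a cycle, not an irreducible variety, so the Samuel inequality must be applied componentwise to each irreducible $W_i$ of $H\cap L$; the scheme $H\cap L$ is a complete intersection and hence Cohen--Macaulay, but its individual irreducible components need not be, so you should cite a form of the inequality that holds without a Cohen--Macaulay hypothesis on both factors (this is true here since $V$ is intersected with a local complete intersection, but it should be said). Second, the full strength of $\mult_p X\cdot\mult_p Y$ is wasted: the only input used from $Y$ is that it passes through $p$, which is exactly what the semicontinuity argument extracts directly. The paper's version is therefore shorter, avoids the componentwise bookkeeping, and does not require the reader to track the precise hypotheses of the Samuel inequality; you may wish to replace your final step by the semicontinuity observation to streamline the argument.
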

\begin{proof}
  It suffices to prove the claim for the case $\Gamma=[V]$ for an
  irreducible variety $V\subset G$. If $V\subset H$ then the claim is
  obvious. Otherwise, the right hand side is given by the intersection
  at $p$ of $\dim V$ generic divisors passing through $p$ and the
  right hand side is given by the intersection at $p$ of $H$ and
  $\dim V-1$ generic divisors passing through $p$. The result follows
  by the semicontinuity of the intersection multiplicity.
\end{proof}

We now consider multiplicities in the direction of an invariant vector
field. Let $\xi\in\fg$ be an invariant vector field on $G$ and $P$ a
polynomial of degree $d$. In particular, $\xi$ is a linear vector
field, and derivation with respect to $\xi$ does not increase the
degree of a polynomial\footnote{The same property holds, with a minor
  technical modification, for arbitrary invariant vector fields on
  commutative group varieties}. The trajectories of $\xi$ can be
written as the solution of a Pfaffian chain. Indeed, $\xi$ spans the
kernel of $n-1$ invariant one-forms $\w\_{n-1}\in\fg^*$, and the
integrability condition for these forms follows from the commutativity
of $\fg$. By Theorem~\ref{thm:mult-cycle} there exists a cycle
$\Gamma(P;\xi)$ in $G$ with
\begin{equation}
  \wt_d \Gamma(P;\xi) \le C_G
\end{equation}
for some constant $C_G$ depending only on $G$, and such that for any
$p\in G$ with $\mult_p^\xi P<\infty$ we have
\begin{equation}
  \mult_p^\xi P = \mult_p(P;\w\_{n-1}) \le \mult_p \Gamma(P;\xi).
\end{equation}
Alternatively, one can apply Theorem~\ref{thm:vf-mc} to $\xi$ directly
(giving a somewhat worse constant).

\begin{Prop}\label{prop:vt-weight}
  Let $P$ be a polynomial of degree $d$ and $T\in\N$. Denote by $V_T$
  the variety of points where $P$ vanishes to order at least $T$ along
  $\xi$,
  \begin{equation}
    V_T := \{ P=\xi P = \cdots = \xi^{T-1} P = 0 \}.
  \end{equation}
  Finally let $\hat V_T$ denote the Zariski closure of
  $V_T\setminus V_\infty$. Then $\wt_d \hat V_T\le C_G/T$.
\end{Prop}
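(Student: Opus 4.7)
The plan is to convert the order-of-vanishing condition defining $\hat V_T$ into a pointwise multiplicity bound on the cycle $\Gamma(P;\xi)$, and then translate this bound into a weight bound on $\hat V_T$. The key difficulty is the second step, which comes down to a ``multiplicity-to-weight'' principle for arbitrary cycles.

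For the first step, note that by construction $V_T \setminus V_\infty$ is Zariski dense in $\hat V_T$, and for $p$ in this dense subset the order $\mult_p^\xi P$ is finite (because $p \notin V_\infty$) and at least $T$ (because $p \in V_T$). The multiplicity cycle inequality displayed immediately before the proposition then yields
\begin{equation*}
  \mult_p \Gamma(P;\xi) \ge \mult_p^\xi P \ge T.
\end{equation*}
Since $p \mapsto \mult_p \Gamma(P;\xi)$ is upper semicontinuous, this inequality extends to every point of the closure $\hat V_T$.

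The second step rests on the following conversion principle: for any effective mixed algebraic cycle $\Gamma$ on $G$ and any subvariety $W$ with $\mult_p \Gamma \ge T$ for every $p \in W$, one has $T \cdot \wt_d W \le C_n \cdot \wt_d \Gamma$ with a constant $C_n$ depending only on $n = \dim G$. Applied with $W = \hat V_T$ and $\Gamma = \Gamma(P;\xi)$, together with $\wt_d \Gamma(P;\xi) \le C_G$, this yields the proposition after absorbing $C_n$ into $C_G$.

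The main obstacle is establishing the conversion principle. My approach is to argue component by component: for each irreducible component $W' \subset W$ of dimension $k$, choose generic linear hyperplanes $H_1,\dots,H_k$ so that $W' \cap H_1 \cap \cdots \cap H_k$ is a transverse set of $\deg W'$ points. At each such point, iterated application of Lemma~\ref{lem:mult-nodrop} gives $\mult_p(\Gamma * H_1 * \cdots * H_k) \ge T$. A careful Bezout-type accounting of the codimensional parts of this intersection cycle, combined with control on how much multiplicity can be concentrated on $\deg W'$ distinct points of a mixed cycle of bounded weight, should yield an inequality of the form $T \deg W' \le C_n \cdot d^{n-k} \cdot \wt_d \Gamma$. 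Dividing by $d^{n-k}$ and summing over irreducible components of $W$ then gives the conversion principle.
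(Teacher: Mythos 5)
Your first step is fine: $\mult_p\Gamma(P;\xi)\ge T$ on $V_T\setminus V_\infty$ and hence, by upper semicontinuity, on $\hat V_T$. The problem is the ``conversion principle'' in your second step, which is false as stated. Consider $\Gamma=T\cdot[H]$ for a hyperplane $H$ (so $\wt_d\Gamma=T/d$), and let $W\subset H$ be any curve of degree $D$. Then $\mult_p\Gamma=T$ for every $p\in W$, yet $T\,\wt_d W=TD/d^{n-1}$, and $T\,\wt_d W\le C_n\,\wt_d\Gamma$ would force $D\le C_n\,d^{n-2}$ --- impossible since $D$ is unconstrained. The failure mode is exactly the one relevant here: the hypothesis $\mult_p\Gamma\ge T$ on $W$ carries no information when $W$ lies strictly inside a higher-dimensional component of $\Gamma$ that already has coefficient $\ge T$; nothing then controls $\deg W$ in terms of $\deg\Gamma$. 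Your sketch (cutting with hyperplanes through points of $W$ and invoking Lemma~\ref{lem:mult-nodrop}) does not close this gap because, after intersecting with generic hyperplanes, the multiplicity of the resulting zero-cycle at those points is bounded by the degree of $\Gamma$ as a whole, not component by component through $W$.

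The paper avoids the issue entirely by a different device. Rather than comparing $\mult_p\Gamma(P;\xi)$ directly with $\wt_d\hat V_T$, one forms the auxiliary cycle
\begin{equation*}
  \Gamma := \bigl(\cdots\bigl(\Gamma(P;\xi)*\{P=0\}\bigr)\cdots\bigr)*\{\xi^{T-1}P=0\},
\end{equation*}
which by construction is \emph{supported on $V_T$}. The remark preceding Lemma~\ref{lem:mult-nodrop} (Bezout for $*$ with divisors of degree $\le d$) gives $\wt_d\Gamma\le\wt_d\Gamma(P;\xi)\le C_G$, while Lemma~\ref{lem:mult-nodrop} propagates the lower bound $\mult_p\Gamma\ge\mult_p\Gamma(P;\xi)\ge T$ through the cutting. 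Because $\Gamma$ is supported on $V_T$, each component $W$ of $\hat V_T$ is itself a component of $\Gamma$, with coefficient $m_W=\mult_p\Gamma\ge T$ at a generic $p\in W$; the inequality $\wt_d\hat V_T\le(1/T)\wt_d\Gamma\le C_G/T$ is then immediate. In short, the crucial missing idea in your argument is to cut $\Gamma(P;\xi)$ by the specific hypersurfaces $\{P=0\},\ldots,\{\xi^{T-1}P=0\}$ so as to force the support down to $V_T$; once the support matches, the multiplicity-to-weight conversion becomes trivial and no general principle is needed.
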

\begin{proof}
  Let $\Gamma$ be the cycle given by
  \begin{equation}
    \Gamma := (\cdots(\Gamma(P;\xi)*\{P=0\})\cdots)*\{\xi^{T-1}P=0\}.
  \end{equation}
  By Lemma~\ref{lem:mult-nodrop} we have
  \begin{equation}\label{eq:gamma-wt-bound}
    \wt_d\Gamma\le C_G.
  \end{equation}
  The cycle $\Gamma$ is clearly supported on $V_T$. Let $W$ be an
  irreducible component of $\hat V_T$ and suppose that it appears in
  $\Gamma$ with multiplicity $m_W$. Then for a generic $p\in W$ we
  have $\mult_p^\xi<\infty$, and by Lemma~\ref{lem:mult-nodrop} and
  the remark preceding it we have
  \begin{equation} \label{eq:mw-bound}
    T \le \mult_p^\xi P \le \mult_p \Gamma(P;\xi) \le \mult_p\Gamma = m_W
  \end{equation}
  Finally from~\eqref{eq:gamma-wt-bound} and~\eqref{eq:mw-bound} we
  have $\wt_d \hat V_T\le C_G/T$ as claimed.
\end{proof}

An application of Lemma~\ref{lem:moreau} now gives the following
multiplicity version of Moreau's result, essentially agreeing with the
multiplicity estimate of \cite{mw:groups2} for the case of a single
group $G$.

\begin{Thm}\label{thm:moreau-extended}
  Let $\Sigma\subset G$ be a finite subset containing $0$, and
  $P\in\C[x\_n]$ be a polynomial of degree $d$. Let $T\in\N$ and
  suppose that for every proper algebraic subgroup $H\subset G$ we
  have
  \begin{equation}
    \#(\Sigma+H)/H > d^{\codim_G H}\cdot C_G/T.
  \end{equation}
  If $P$ vanishes at every point of $\Sigma^{(n)}$ with multiplicity
  at least $T$ in the direction of $\xi$, then there exists
  $\gamma\in\Sigma^{(n)}$ such that $P$ vanishes identically at
  $\gamma$ in the direction of $\xi$.
\end{Thm}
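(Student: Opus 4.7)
The plan is to combine Proposition~\ref{prop:vt-weight} with Lemma~\ref{lem:moreau}, arguing by contradiction. I would assume the hypotheses of the theorem together with the negation of the conclusion, namely that no $\gamma\in\Sigma^{(n)}$ has $P$ vanishing identically along $\xi$. Since by hypothesis $P$ vanishes to order at least $T$ along $\xi$ at every point of $\Sigma^{(n)}$, this would place
\begin{equation*}
\Sigma^{(n)}\subset V_T\setminus V_\infty\subset\hat V_T,
\end{equation*}
and the goal would then be to derive a contradiction from this containment.

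By Proposition~\ref{prop:vt-weight} we have $\wt_d\hat V_T\le C_G/T$, so the coset hypothesis of the theorem supplies
\begin{equation*}
\#(\Sigma+H)/H>d^{\codim_G H}\wt_d\hat V_T
\end{equation*}
for every proper algebraic subgroup $H\subset G$, which is precisely what Lemma~\ref{lem:moreau} demands. Applying the lemma to $V:=\hat V_T$ with $m:=\dim\hat V_T\le n-1$ would yield $\bigcap_{\gamma\in\Sigma^{(m+1)}}(-\gamma+\hat V_T)=\emptyset$. Because $0\in\Sigma$ forces $\Sigma^{(m+1)}\subset\Sigma^{(n)}\subset\hat V_T$, the origin would lie in each translate $-\gamma+\hat V_T$ for $\gamma\in\Sigma^{(m+1)}$, producing the desired contradiction and hence forcing $\Sigma^{(n)}\cap V_\infty\ne\emptyset$.

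The main obstacle I anticipate is verifying the hypothesis of Lemma~\ref{lem:moreau} that $\hat V_T$ be cut out set-theoretically by polynomials of degree $\le d$; this is not literally true, since $\hat V_T$ is only the union of those irreducible components of $V_T=\{P=\xi P=\cdots=\xi^{T-1}P=0\}$ that are not contained in $V_\infty$. My plan to work around this rests on two observations: the polynomials $\xi^kP$ all have degree $\le d$ because $\xi$ is an invariant vector field on $G=(\C^*)^n$ and hence preserves polynomial degrees, and they all vanish on $V_T\supset\hat V_T$; and from inspecting the proof of the lemma, the degree hypothesis is used solely to produce, for each top-dimensional component $V_i$ of $V$ and each $\gamma_i\in\Sigma$ supplied by the stabilizer argument, a polynomial $P_i$ of degree $\le d$ vanishing on $V$ but not on $\gamma_i+V_i$. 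One of the $\xi^kP$ plays this role whenever $\gamma_i+V_i\not\subset V_T$. The remaining degenerate case, in which $\gamma_i+V_i\subset V_T\setminus\hat V_T\subset V_\infty$, will require separate treatment; my plan there is to tighten the stabilizer count so as to steer $\gamma_i$ away from the proper algebraic locus $\{\gamma\in G:V_i+\gamma\subset V_\infty\}$ (proper because $V_i\not\subset V_\infty$), exploiting both the strict inequality in the coset hypothesis and the $H_i$-invariance of this locus; alternatively, $V_i\subset-\gamma_i+V_\infty$ together with the fact that $V_i$ must contain a point arising as a partial sum from $\Sigma$ should yield an element of $\Sigma^{(n)}\cap V_\infty$ directly.
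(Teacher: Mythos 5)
Your top-level argument coincides with the paper's: the hypotheses give $\Sigma^{(n)}\subset V_T$, Proposition~\ref{prop:vt-weight} gives $\wt_d\hat V_T\le C_G/T$, and Lemma~\ref{lem:moreau} applied to $V=\hat V_T$ empties $\bigcap_{\gamma\in\Sigma^{(n)}}(-\gamma+\hat V_T)$, forcing some $\gamma\in\Sigma^{(n)}$ into $V_T\setminus\hat V_T\subset V_\infty$. You frame it as a contradiction while the paper argues directly, but that is cosmetic. The concern you flag about the degree hypothesis of Lemma~\ref{lem:moreau} is sharp and genuine: $\hat V_T$, being the union of those components of $V_T$ not contained in $V_\infty$, is in general not cut out set-theoretically by degree-$\le d$ polynomials. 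The paper's own proof invokes Lemma~\ref{lem:moreau} against $\hat V_T$ without commenting on this point, so you are reading more carefully than the source text.

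However, neither of your proposed repairs closes the hole. For the first: the locus $L:=\{\gamma\in G:V_i+\gamma\subset V_\infty\}$ is proper and $H_i$-invariant, but it need not be a finite union of $H_i$-cosets --- it can be a positive-dimensional subvariety of $G$ --- so the coset count $\#(\Sigma+H_i)/H_i>s$ does not by itself let you pick $\gamma_i\in\Sigma$ simultaneously avoiding both $L$ and the at most $s$ cosets that carry $V_i$ onto some $V_j$. For the second: the assumption $\Sigma^{(n)}\subset\hat V_T$ does not guarantee that a given top-dimensional component $V_i$ of $\hat V_T$ actually contains a point of $\Sigma^{(n)}$ (and still less a point of the appropriate $\Sigma^{(j)}$ in later stages of the induction), so you cannot automatically turn $\gamma_i+V_i\subset V_\infty$ into an element of $\Sigma^{(n)}\cap V_\infty$. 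A cleaner route is to restate and reprove Lemma~\ref{lem:moreau} with a hypothesis tailored to this application --- for instance, that $V$ is the union of those components of a variety cut out by degree-$\le d$ equations that are not contained in a fixed closed set $Z$ known to avoid the relevant $\Sigma^{(j)}$ --- and track the extra $Z$-components explicitly through the induction, rather than trying to force the original statement to apply to $\hat V_T$ as is.
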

\begin{proof}
  Define $V_T,\hat V_T$ as in Proposition~\ref{prop:vt-weight}. By assumption,
  \begin{equation}
    0 \in \bigcap_{\gamma\in\Sigma^{(n)}} (-\gamma+V_T).
  \end{equation}
  On the other hand, by Proposition~\ref{prop:vt-weight} we have
  $\wt_d \hat V_T\le C_G/T$ and Lemma~\ref{lem:moreau} then gives
  \begin{equation}
    \bigcap_{\gamma\in\Sigma^{(n)}} (-\gamma+\hat V_T) = \emptyset.
  \end{equation}
  It follows that for some $\gamma\in\Sigma^{(n)}$ we have
  $\gamma\in V_T\setminus\hat V_T$, which implies the claim of the
  theorem.
\end{proof}

\section{A compactness property for semicontinuous bounds}
\label{sec:sc-bounds}
  
In this section we assume for simplicity of the formulation that the
ambient variety $M$ is given by $M=\C^n$ (although similar results
would hold, nearly verbatim, in a much more general context).

Recall that a function $F:M\to\N$ is said to be (algebraic)
\emph{upper semicontinuous} if the sets
$F_{\ge n}:=F^{-1}([n,\infty))$ are closed varieties for each
$n\in\N$. We will say that $F$ has \emph{complexity bounded by $D$} if
moreover, all of these sets can be defined by equations of degree at
most $D$.

The following proposition appeared in \cite{mult-morse}. We include
the proof for the convenience of the reader.

\begin{Prop} \label{prop:sc-compact}
  Let $D\in N$ and $f:M\to\N$ an arbitrary bounded function. Then
  there exists a finite set of points $P\subset M$ such that for any
  upper semicontinuous function $F$ of complexity bounded by $D$,
  \begin{equation}
    f\rest P \le F\rest P \implies f \le F.
  \end{equation}
\end{Prop}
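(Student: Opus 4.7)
My plan is to reduce the statement to a linear-algebra exercise in the finite-dimensional space of polynomials of degree at most $D$. The key observation is that $f$, being bounded, has only finitely many non-trivial sublevel sets $S_\nu:=f^{-1}([\nu,\infty))$, $\nu=1,\ldots,N$ (where $N$ bounds $f$), and the inequality $f\le F$ is equivalent to the finite family of inclusions $S_\nu\subset F_{\ge\nu}$. Since by hypothesis each $F_{\ge\nu}$ is cut out by polynomials of degree at most $D$, it will suffice to construct, for every fixed $\nu$, a finite subset $P_\nu\subset S_\nu$ with the property that any polynomial of degree $\le D$ vanishing on $P_\nu$ already vanishes on $S_\nu$. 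The promised test set will then be $P:=P_1\cup\cdots\cup P_N$.

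To build $P_\nu$ I would work in the finite-dimensional space $W:=\C[x_1,\ldots,x_n]_{\le D}$, with $K\subset W$ the subspace of polynomials vanishing identically on $S_\nu$. Each $p\in S_\nu$ defines an evaluation functional on the finite-dimensional quotient $W/K$. I would pick points $p_1,p_2,\ldots\in S_\nu$ greedily so that each new evaluation is linearly independent of its predecessors in $(W/K)^*$; finite-dimensionality forces termination after some $m\le\dim(W/K)$ steps, at which stage no $p\in S_\nu$ contributes an independent functional, meaning that $\mathrm{ev}_{p_1},\ldots,\mathrm{ev}_{p_m}$ span the same subspace of $(W/K)^*$ as the whole family $\{\mathrm{ev}_p\}_{p\in S_\nu}$. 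Dualizing, any polynomial $q\in W$ vanishing on $P_\nu:=\{p_1,\ldots,p_m\}$ must be annihilated by all $\mathrm{ev}_p$ with $p\in S_\nu$, and therefore lies in $K$, as desired.

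With $P:=\bigcup_\nu P_\nu$ in hand, the conclusion would be immediate: given any $F$ of complexity $\le D$ with $f\rest P\le F\rest P$, each $P_\nu$ sits inside $F_{\ge\nu}$, so every defining polynomial of $F_{\ge\nu}$ has degree $\le D$ and vanishes on $P_\nu$, hence on $S_\nu$ by construction, giving $S_\nu\subset F_{\ge\nu}$ for every $\nu$, and thus $f\le F$ on all of $M$. I do not anticipate any genuine obstacle in this argument; the one conceptual point to keep in mind is that the complexity hypothesis controls the degree of \emph{defining equations} for $F_{\ge\nu}$ (a linear condition on $W$), rather than the intrinsic degree of the variety, and it is precisely this linear condition on a finite-dimensional space that powers the reduction.
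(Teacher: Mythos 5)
Your proposal is correct and follows essentially the same route as the paper's proof: reduce $f\le F$ to the finitely many inclusions of superlevel sets $f_{\ge i}\subset F_{\ge i}$, and for each $i$ exploit the finite-dimensionality of the space of degree-$\le D$ polynomials to extract a finite test set in $f_{\ge i}$ whose vanishing ideal (in that space) equals that of $f_{\ge i}$. Your greedy linear-independence construction of the $P_\nu$ is just a concrete spelling-out of the paper's codimension argument.
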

\begin{proof}
  Denote by $N$ an upper bound for $f$. Then $f\le F$ if and only if
  $f_{\ge i}\subset F_{\ge i}$ for $i=1,\ldots,N$. Thus it will
  suffice to construct a finite set $P_i\subset f_{\ge i}$ such that for any set $S$ of
  complexity bounded by $D$,
  \begin{equation}
    P_i \subset S \implies f_{\ge i}\subset S
  \end{equation}
  and take $P=\cup_{i=1}^N P_i$.

  Let $L$ denote the linear space of polynomials of degree bounded by
  $D$ on $M$. For any $p\in M$ let $\phi_p:L\to\C$ denote the
  functional of evaluation at $p$. Finally, for any set $P\subset M$
  denote by $L_P\subset L$ the linear subspace of polynomials which vanish at
  every point of $P$.

  We need to construct a finite set $P_i\subset f_{\ge i}$ with
  $L_{P_i}=L_{f_{\ge i}}$. This is clearly possible. Indeed,
  $L_{f_{\ge i}}$ is the kernel of the set of functionals
  $\{\phi_p:p\in f_{\ge i}\}$. Since $L_{f_{\ge i}}$ has finite
  codimension in $L$, one can choose a finite subset $P_i$ (in fact,
  of size equal to this codimension) of functionals whose kernel,
  $L_{P_i}$ agrees with $L_{f_{\ge i}}$. This concludes the proof.
\end{proof}

The following simple exercise is left for the reader.

\begin{Lem}\label{lem:sc-sum}
  Let $F_i,i=1,\ldots,N$ be an upper semicontinuous functions with complexity $D_i$
  and bounded by $B_i$. Then $\sum_{i=1}^N F_i$ is an upper semicontinuous
  function with complexity bounded by $D'=D'(D\_N,B\_N)$.
\end{Lem}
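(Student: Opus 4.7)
The plan is to reduce to $N=2$ by induction on $N$, and for that case to decompose the superlevel sets of $F_1+F_2$ as a finite Boolean combination of superlevel sets of $F_1$ and $F_2$. Concretely, for any $n$ I would use the set-theoretic identity
\begin{equation*}
  (F_1+F_2)_{\ge n} = \bigcup_{\substack{a+b=n\\ 0\le a\le B_1,\, 0\le b\le B_2}} (F_1)_{\ge a}\cap (F_2)_{\ge b}.
\end{equation*}
Since each $(F_i)_{\ge c}$ is closed by hypothesis, this exhibits $(F_1+F_2)_{\ge n}$ as a finite union of closed varieties, which already establishes upper semicontinuity of the sum.

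What remains is to track polynomial degrees through these set operations. Intersections are cheap: if $(F_1)_{\ge a}$ and $(F_2)_{\ge b}$ are cut out by collections of polynomials of degrees at most $D_1$ and $D_2$, the union of these collections cuts out their intersection with complexity bounded by $\max(D_1,D_2)$. For finite unions I would invoke the standard fact that if $V_1,\ldots,V_K$ are defined by polynomials of degree at most $D$, then $\bigcup_j V_j$ is cut out set-theoretically by all products $f_{1}\cdots f_{K}$, with $f_j$ running over a defining family for $V_j$, giving complexity at most $KD$. The cardinality of the index set $\{(a,b):a+b=n,\, a\le B_1,\, b\le B_2\}$ is at most $\min(B_1,B_2)+1$, so in the $N=2$ case the complexity of $(F_1+F_2)_{\ge n}$ is bounded by $(\min(B_1,B_2)+1)\max(D_1,D_2)$.

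The induction on $N$ closes the argument: the partial sum $F_1+\cdots+F_{N-1}$ is upper semicontinuous and bounded by $B_1+\cdots+B_{N-1}$ with complexity controlled by the inductive hypothesis, so the two-term case applies to it together with $F_N$. The only step that requires real care is the compounding of the complexity bound through the induction, and this is purely a matter of bookkeeping; I do not anticipate a genuine obstacle, and one could if desired read off an explicit (if crude) closed form for $D'(D_1,\ldots,D_N,B_1,\ldots,B_N)$.
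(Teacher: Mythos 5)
Your argument is correct, and since the paper explicitly leaves this lemma as an exercise there is no in-text proof to compare against; the decomposition you give is the natural one and surely what the author had in mind. The identity
\[
  (F_1+F_2)_{\ge n} \;=\; \bigcup_{\substack{a+b=n\\ 0\le a\le B_1,\, 0\le b\le B_2}} (F_1)_{\ge a}\cap (F_2)_{\ge b}
\]
is correct (given $F_1(p)+F_2(p)\ge n$, take $a=\min(F_1(p),n)$ and $b=n-a$), your count of the index set by $\min(B_1,B_2)+1$ is right, and the degree bookkeeping for set-theoretic intersections (take the union of defining families) and unions (take products over the families) is standard and valid. The induction on $N$ then closes without difficulty, and one can extract an explicit bound $D'$ if desired, as you note.
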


Finally, the following proposition allows the application of
Proposition~\ref{prop:sc-compact} to multiplicity cycles.

\begin{Prop} \label{prop:mc-complexity} Let $\Gamma$ be an algebraic
  cycle (possibly of mixed dimension) of total degree bounded by $d$.
  Then $p\to\mult_p\Gamma$ is an upper semicontinuous function of
  complexity bounded by a constant $D$ depending only on $d$.
\end{Prop}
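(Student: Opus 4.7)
The plan is to reduce to a single irreducible variety using the linearity of $\mult_p$, and then to describe each multiplicity level set $\{p : \mult_p V \ge m\}$ concretely via elimination applied to a bounded family of auxiliary intersections with affine-linear flats through $p$.

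For the reduction, write $\Gamma = \sum_i m_i V_i$ with the $V_i$ irreducible. Since $\sum_i m_i \deg V_i \le d$, the number of components, each coefficient $m_i$, and each $\deg V_i$ are bounded by $d$, and for any $N \ge 1$,
\begin{equation}
  \{p : \mult_p \Gamma \ge N\} = \bigcup_{(a_i) : \sum_i m_i a_i \ge N} \bigcap_i \{p : \mult_p V_i \ge a_i\},
\end{equation}
which is a finite Boolean combination of cardinality bounded in $d$. It therefore suffices to bound the complexity of $\{p : \mult_p V \ge m\}$ uniformly over all irreducible $V$ of some dimension $j$ and degree $d' \le d$.

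Fix such a $V$, and for a parameter matrix $A \in \C^{j \times n}$ let $L_{A,p} := \{x : A(x-p) = 0\}$ be the codimension-$j$ affine flat through $p$ determined by $A$. For generic $A$ the intersection $V \cap L_{A,p}$ is a $0$-dimensional scheme of total length $d'$, and $\mathrm{length}_p(V \cap L_{A,p}) = \mult_p V$; since this local length is upper semicontinuous in $A$ with $\mult_p V$ as its generic (hence minimum) value, the condition $\mult_p V \ge m$ is equivalent to $\mathrm{length}_p(V \cap L_{A,p}) \ge m$ holding for \emph{every} $A$. The scheme $V \cap L_{A,p}$ is cut out by the defining equations of $V$ (of degree at most $d$) together with the $j$ affine-linear forms $A(x-p)$, and its total length is bounded by $d'$, so classical resultant or Fitting-ideal constructions on a $0$-dimensional scheme of bounded total length express the condition $\mathrm{length}_p \ge m$ as the vanishing of polynomials in $(A,p)$ of degree bounded in terms of $d$. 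By Noetherianity, the intersection over all $A \in \C^{j \times n}$ reduces to an intersection over a finite generic subfamily, yielding defining equations for $\{p : \mult_p V \ge m\}$ in $p$ of degree at most some $D = D(d)$.

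The main obstacle I expect is ensuring that this bound $D(d)$ is uniform over all possible $V$, not merely for a single fixed $V$. This should follow by performing the entire elimination uniformly over the universal cycle on the Chow variety of cycles of degree at most $d$ in $M$: its ambient dimension and degree depend only on $n$ and $d$, so the multiplicity function on this universal family is a constructible function of uniformly bounded complexity, which specializes to the desired degree bound for each particular $\Gamma$.
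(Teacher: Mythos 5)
Your proposal follows essentially the same route as the paper. The paper's proof reduces to cycles of pure dimension $k$ via Lemma~\ref{lem:sc-sum}, then parametrizes these by the Chow variety $\cC_{k,d}$ and simply asserts that the incidence correspondence $\cM_\mu=\{(\Gamma',p):\mult_p\Gamma'\ge\mu\}\subset\cC_{k,d}\times M$ is algebraic, from which uniformly bounded fiber degrees follow. You reach the same conclusion but take the trouble to justify the algebraicity the paper leaves implicit, describing $\mult_p V$ as the generic (minimal) local length of $V\cap L_{A,p}$ over auxiliary flats and invoking elimination and Noetherianity to bound the degrees; your final paragraph then appeals to the same universal family over the Chow variety for uniformity in $V$. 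The only minor divergence is that you reduce further to irreducible components via a Boolean combination of level sets, where the paper stops at pure-dimensional effective cycles and handles the summation through Lemma~\ref{lem:sc-sum}; both are harmless. Two small points worth tightening in your version: the phrase ``for every $A$'' should exclude the non-generic $A$ for which $L_{A,p}$ meets $V$ improperly near $p$, and passing from ``a finite subfamily suffices by Noetherianity'' to an explicit degree bound $D(d)$ requires the elimination to be carried out over the universal family, as you indeed note at the end.
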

\begin{proof}
  By Lemma~\ref{lem:sc-sum} it is enough to establish the result
  for a cycle of pure dimension $k$. The family of all such cycles
  is parametrized by the projective Chow variety $\cC_{k,d}$. Moreover,
  the correspondence
  \begin{equation}
    \cM_\mu \subset \cC_{k,d}\times M \qquad \cM_\mu=\{ (\Gamma',p):\mult_p\Gamma'\ge\mu \}
  \end{equation}
  is algebraic for $\mu=1,\ldots,d$ and empty for $\mu>d$. In
  particular, the fibers of $\cM_\mu$ under the projection to
  $\cC_{k,d}$ have uniformly bounded degrees (and hence are also
  set-theoretically cut out by equations of uniformly bounded degrees).
  This concludes the proof.
\end{proof}

\let\~=\tildeaccent \let\^=\hataccent
\bibliographystyle{plain}
\bibliography{nrefs}

\end{document}